\theoremstyle{plain}
\newtheorem{theorem}{Theorem}[section]
\newtheorem{lemma}[theorem]{Lemma}
\theoremstyle{definition}
\theoremstyle{remark}
\newtheorem{remark}[theorem]{Remark}
\newcommand{\R}{\mathbb{R}}
\newcommand{\Mg}{M_g}
\newcommand{\eps}{\varepsilon}
\def \G{\mathcal{G}}
\def \P{\mathcal{P}}
\title{Uncertainty Quantification in Hierarchical Vehicular Flow Models }
\author{Michael Herty\footnote{Institut f\"{u}r Geometrie und Praktische Mathematik,  		RWTH Aachen University,  
		Templergraben~55, 52062~Aachen, Germany, $\{herty,iacomini\}@igpm.rwth-aachen.de$} \mbox{ and } Elisa Iacomini$^*$}               
\date{}
\begin{document}
\maketitle
\begin{abstract}
We consider kinetic vehicular traffic flow models of BGK type  \cite{MR4063909}. Considering different spatial and temporal scales, those models allow to derive a hierarchy of traffic models including a hydrodynamic description. In this paper, the kinetic BGK--model is extended by introducing a parametric stochastic variable to describe possible  uncertainty in traffic.  The interplay of  uncertainty with the given model hierarchy is studied in detail. Theoretical results on  consistent formulations of the stochastic differential equations on the hydrodynamic level are given. The effect of the possibly negative diffusion in the stochastic hydrodynamic model is studied and numerical simulations of uncertain traffic situations are presented. 
\end{abstract}

{\bf Keywords.} Traffic flow, BGK models, stochastic Galerkin, Aw-Rascle-Zhang model, kinetic equations.

\section{Introduction}

The mathematical description of vehicular traffic flow is possible at different spatial and temporal scales ranging from models for individual cars \cite{FTL1961} up to a description of aggregated quantities like the traffic density \cite{Bando1995,lighthill1955PRSL,MR1951956,MR3231191}. Recent works present models on those scales as well as methods to traverse the existing hierarchy, see e.g.  \cite{MR2834083,aw2000SIAP,MR2580958,MR3356989,MR3541527,MR3721873} and  references therein. We are particularly interested in two scales, the hydrodynamic or  fluid--like models for aggregated quantities and a  statistical description of traffic as e.g. proposed in  \cite{MR2646062,KlarWegener96,BorscheKlar2018,klar1997Enskog}. Our contribution is mainly based on the recently introduced hierarchy \cite{MR4063909} where in particular a class of BGK (Bhatnagar, Gross and Krook~\cite{BGK1954}) models have been considered. The fluid--like models considered are second-order Aw-Rascle-Zhang type models \cite{aw2000SIAP,Zhang2002}. The hierarchy presented in \cite{MR4063909} 
has been  deterministic assuming that all  model parameters  and initial data are  known exactly. However, often there is need to take uncertainties into account, e.g. due to noisy  measurements and due to variations in the behavior of vehicular traffic leading to   uncertainties. Then, it is necessary to extend the concepts to the stochastic case to consider probability laws or  statistical  moments. The treatment of stochastic models can be either non--intrusive, e.g., based on sampling (Monte--Carlo)   \cite{LEcuyer2002,Taimre2011,S18}  or based on collocation \cite{Babuska2007}, or intrusive \cite{O.P.LeMaitre2010,Xiu2010}. In the later approach,  stochastic input is represented by a series of orthogonal functions,  known as generalized polynomial chaos~(gPC) expansions~\cite{S1,S2,S3}, substituted in the governing equations and then projected using a Galerkin projection. We follow this intrusive approach in order to investigate how  uncertainty propagates between the kinetic and the fluid flow hierarchy of description. The possible links are depicted in Figure \ref{fig1}.  Recently,  results using the intrusive approach  for kinetic equations  have been presented and we refer to \cite{K1,K2,Zanella2019,Yuhua2017,Yuhua2018,Carrillo2019,Zanella2020} for corresponding results. For hyperbolic models on the fluid type description there have also been recent results~\cite{H0,H2,H3,H4,S5,GersterJCP2019,GersterHertyCicip2020,KuschMaxPrin2017} -- mostly centered at the question of hyperbolicity of the underlying gPC expanded system of partial differential equations. For the presented investigation we in particular refer to  \cite{gerster2021stability} where a gPC expansion for the Aw-Rascle-Zhang has been established. Therein, it has been shown that for a particular choice of orthogonal functions, the resulting expanded system is hyperbolic, see \cite[Theorem \ref{thm1}]{gerster2021stability}. In this paper we will investigate the link between stochastic BGK and stochastic second order traffic flow models. In \cite{MR4063909} the diffusivity coefficient has been used to classify possible unstable traffic regimes. We will show that the discussion  translates to the stochastic case and allows to characterize possible traffic zones of high risk. Here, we also investigate the dynamic case compared with the previous publication. Our presentation follows the diagram shown in Figure \ref{fig1}, in particular, the indicated blue connections.

The propagation of uncertainty through hierarchies has also been explored e.g. in the case of the Vlasov-Poisson-Fokker-Planck system \cite{Yuhua2018}. Contrary to the approach here, however, the resulting diffusive system has been shown to be well--posed without further assumptions. Due to the nonlinear hyperbolic structure the presented results therein do not extend directly to the present case. Also, in \cite{MR3878149} the propagation of uncertainty is discussed but the origin and treatment of uncertainty is very different to the presented work. 

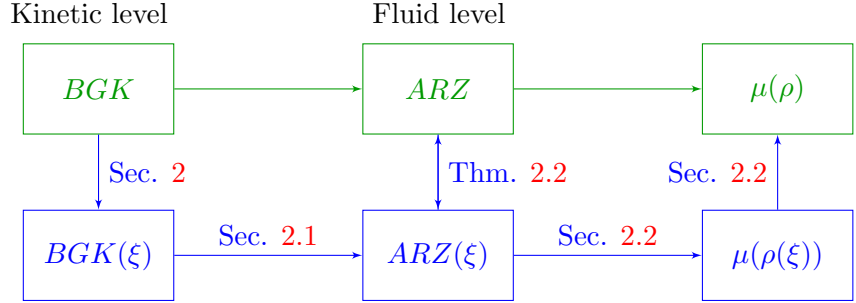
\begin{figure}[htb]\center
\begin{tikzpicture}[auto, node distance=2cm,>=latex']
	\colorlet{darkgreen}{green!60!black}
\node [draw, darkgreen,
	minimum width=2cm,
	minimum height=1.2cm,
	right=1cm 
]  (bgk) {$BGK$};

\node [draw, darkgreen,
     minimum width=2cm, 
	minimum height=1.2cm,
	right=2.5cm of bgk
] (arz) {$ARZ$};

\node [draw, darkgreen,
     minimum width=2cm, 
	minimum height=1.2cm,
	right=2.5cm of arz
] (mu) {$\mu(\rho)$};

\node [draw,blue,
	minimum width=2cm, 
	minimum height=1.2cm, 
	below= 1cm of bgk.south west,
	anchor=north west
]  (bgkuq) {$BGK(\xi)$};

\node [draw,blue,
	minimum width=2cm, 
	minimum height=1.2cm, 
	below= 1cm of arz.south west,
	anchor=north west
]  (arzuq) {$ARZ(\xi)$};

\node [draw,blue,
	minimum width=2cm, 
	minimum height=1.2cm, 
	below= 1cm of mu.south west,
	anchor=north west
]  (muuq) {$\mu(\rho(\xi))$};

\node [
	minimum width=1.6cm, 
	minimum height=0.8cm, 
	above= 0.8cm of bgk.north west,
	left = 0.3cm of bgk.north west,
	anchor=south west
]  (kin) {Kinetic level};

\node [
	minimum width=1.6cm, 
	minimum height=0.8cm, 
	above= 0.8cm of arz.north west,
	left = 0.0cm of arz.north west,
	anchor=south west
]  (mac) {Fluid level};

\draw[darkgreen, ->](bgk) -- (arz) node[midway] {};
\draw[darkgreen, ->](arz) -- (mu) node[midway] {};

\draw[blue,->](bgk) -- (bgkuq) node[midway] {Sec. \ref{sec:BGK}};
\draw[blue,->](arzuq) -- (arz) node[midway] {};
\draw[blue,->](arz) -- (arzuq) node[midway] {Thm. \ref{thm1}};
\draw[blue,->](bgkuq) -- (arzuq) node[midway] {Sec. \ref{sec:S_ARZ}};
\draw[blue,->](muuq) -- (mu) node[midway] {Sec. \ref{sec:mu}};
\draw[blue,->](arzuq) -- (muuq) node[midway] {Sec. \ref{sec:mu}};


\end{tikzpicture}
\caption{Outline of the model hierarchy. The left two columns indicate the kinetic and fluid description of traffic flow as presented in \cite{MR4063909}. The third column refers to the diffusion coefficient $\mu(\rho)$  to classify traffic instabilities. The green hierarchy is deterministic while the blue includes a parametric uncertainty $\xi.$ The indicated links are established in this paper. 
}
\label{fig1}
\end{figure}

\section{Hierarchical stochastic traffic flow models}\label{sec:BGK}

A kinetic traffic flow model reads
\begin{equation}\label{eq:generalKineticModel}
	\partial_t f(t,x,v) + v \partial_x f(t,x,v) = \frac{1}{\varepsilon} Q[f,f](t,x,v),
\end{equation}
where $f(t,x,v)\, : \mathbb{R}^+ \times\mathbb{R} \times [0,V_M]  \rightarrow \mathbb{R}^+$ is the mass distribution function of traffic. The operator $Q$ encodes the detailed car--to--car interactions and it will be modeled in the following as a linear operator of BGK type. The quantity $\varepsilon$ is positive, and yields a relaxation rate weighting the relative strength between the convective term and  source term. The spatial variable is denoted by $x \in \mathbb{R}$ and the velocity $v$ is assumed to be bounded by zero and a maximum speed $V_M.$ Finally, $t\geq 0$ is time and we assume w.l.o.g. that the initial datum $f_0(x,v)$ is such that the  density $\rho_0$ is bounded by one, i.e., 
\begin{equation}
	\int_0^{V_M} f_0(x,v) dv =:\rho_0(x) \leq 1 \; \forall x \in \mathbb{R}. 
\end{equation}
BGK type collision operators prescribe a relaxation to equilibrium at rate $\varepsilon$. In the space homogeneous case, the equilibrium is characterize by a function  $M_f(v;\rho)$ called  Maxwellian possibly depending on $\rho=\rho_0.$  The Maxwellian defines the mean speed of vehicles at equilibrium through the relation
\begin{equation} \label{eq:macroQuantitiesEq}
\quad U(\rho) = \frac{1}{\rho} \int_0^{V_M} v M_f(v;\rho) d v.
\end{equation}
The precise modeling of $Q$ as well as the existence of suitable Maxwellians has been discussed intensively in the literature and we refer e.g. to \cite{PSTV2,MR4063909}. 
Integrating equation~\eqref{eq:generalKineticModel} in velocity space, and provided that $\int_0^{V_M} Q[f,f] dv=0$, and one obtains the evolution equation for the density $\rho(t,x)=\int_0^{V_M} f(t,v,x) dv$ as 
\begin{equation}\label{eq:macroKin}
	\partial_t \rho(t,x)+ \partial_x \left(   \int_0^{V_M} v f(t,x,v) d v \right)= 0 . 
\end{equation}
 If the system approaches equilibrium, $f \rightarrow M_{f}$, then 
\begin{equation}\label{eq:macroEq}
	\partial_t \rho(t,x) + \partial_x \left( \rho(t,x) U(\rho(t,x)) \right) = 0.
\end{equation}
The previous equation and the initial data $\rho(0,x)=\rho_0(x)$   provides a level of description on an aggregated, fluid--like level.   If, however, the system is not at equilibrium, the  equation~\eqref{eq:macroKin} is still coupled to the kinetic equation~\eqref{eq:generalKineticModel}.   In the case $\varepsilon \to  0$,  the interactions of cars are so frequent to instantaneously relax $f$ to the local equilibrium distribution $M_f$.  Instead, we expect that if $\varepsilon>0$ is small but positive,  we are  in a regime where  the kinetic equation is given by an extended continuum hydrodynamic equations as for example  the Aw-Rascle and Zhang model \cite{aw2000SIAP,Zhang2002}. Studying stability properties of traffic patterns in terms of an asymptotic analysis in terms of the parameter  $\varepsilon$ has been conducted e.g. \cite{MR4063909,SeiboldFlynnKasimovRosales2013} using a Chapman-Enskog expansion.
\medskip
The Aw--Rascle--Zhang equations are a system of hyperbolic equations for traffic density $\rho$ and (average) velocity $v=v(t,x)=\frac{q(t,x)}{\rho(t,x)}$ for $\rho>0$. Here, $q(t,x)$ is the flux and the function $h$ will be introduced below in equation \eqref{h}. For some given equilibrium velocity $V^{eq}:\R^+\to\R^+$ decreasing in its argument,  the equations read for $\rho=\rho(t,x)$ and $q=q(t,x)$ with $x\in \R, t \in \R^+:$

\begin{align}\label{eq:ARZ}
&		\partial_t{\rho}+\partial_x({q}-{\rho} h({\rho}))=0, \\
&		\partial_t {q}+\partial_x \Bigl(\frac{{q^2}}{{\rho}} - {q} h({\rho})\Bigr)=\frac{1}{\epsilon}\left( \rho V_{eq}({\rho}) + \rho h(\rho) -{q}\right). \end{align}
In the limit $\epsilon\to 0$ we  formally obtain a  (first--order) consistent approximation of solutions to \eqref{eq:ARZ} to \eqref{eq:macroKin} by defining  for $\rho\geq 0$ 
\begin{equation} V^{eq}(\rho) =U(\rho). \end{equation} 

In the following we are interested in the link between \eqref{eq:ARZ} and \eqref{eq:generalKineticModel}, resp.\ \eqref{eq:BGK} in the {\em stochastic case}.  A key observation  in the deterministic analysis \cite{MR4063909} has been the link between a discretization of the kinetic equation \eqref{eq:generalKineticModel} using a finite number of particles and the Aw--Rascle--Zhang traffic flow model. This connection as been established using the variable $w \in W=[w_{\min}, \infty)$  
\begin{equation}
	w = v+ h(\rho). 
\end{equation}
Here, $w_{\min}=h(0)$ and $h(\rho):\R^+\to\R^+$ is an increasing, differentiable function of the density called  hesitation or pressure function \cite{MR3231191}. We  assume that for $\gamma \in \{1,2\}$  
\begin{equation} \label{h} 
	h(\rho)=\rho^\gamma.
	\end{equation} 
The quantity of $w$ can be understood as a driver's preference that is Lagrangian quantity~ \cite[Section 4]{aw2000SIAP}. Based on a particle description the link 
between the kinetic equation for $g:\R^+\times \R \times W \to \R^+$ 
\begin{equation}\label{eq:BGK}
	\partial_t g(t,x,w)+\partial_x\Big[ (w-h(\rho(t,x))) g(t,x,w)\Big] = \frac{1}{\eps} \Big( M_g(w;\rho(t,x)) - g(t,x,w) \Big),
\end{equation}
and the Aw--Rascle--Zhang equations \eqref{eq:ARZ} for the density $\rho$ and  flux $q$ 
\begin{equation}
	\rho(t,x)=\int_W g(t,x,w) dw,  \qquad q(t,x)=\int_W w \; g(t,x,w) dw
\end{equation}
 has been established using asymptotic analysis in $\varepsilon.$ The Maxwellian $M_g$ can be related to $M_f$, which is assumed to fulfill for any $\rho \in \R$ 
 
 \begin{align}
 	\int_W M_g (w;\rho) \ dw &= \rho,	\label{eq:m1}\tag{M1}\\
 	\int_W w \ M_g(w;\rho)\ dw &=\rho V_{eq}(\rho)+\rho h(\rho).\label{eq:m2}\tag{M2}
 \end{align}
 The function  $V_{eq}:\R \to \R^+_0$ is the previously introduced equilibrium velocity.  
As discussed we are interested in the description of vehicular traffic on the kinetic \eqref{eq:BGK} and fluid--dynamic \eqref{eq:ARZ} level in the presence of parametric uncertainty $\xi.$ This uncertainty may have many origins but for now we simply assume that it can be described by a (possibly multi-dimensional) random variable $\omega$. Let 
the random variable $\omega$ be defined on the probability space $(\Omega_\omega,\mathcal{F}(\Omega),\mathbb{P})$. Further, we denote by  $\xi=\xi(\omega):\Omega_\omega \to \Omega \subset \R^d$ a (possibly d-dimensional) real-valued random variable. Assume further that $\xi$ is absolutely continuous with respect to the Lebesgue measure on $\R^d$ and denote by     $f_\Xi(\xi):\Omega \to \R_+$ the probability density function of $\xi$. For simplicity we assume that the uncertainty enters only in the initial data $g_0(x,w,\omega)$ that is now random field defined on $\R \times W \times \Omega_\omega$ that is  denoted by $g_0(x,w,\xi)=g_0(x,w, \xi(w)): \R\times W \times \Omega \to \R$. Further, we assume that $g_0(x,w,\cdot) \in L^2(\R^d,f_\Xi)$ a.e. in $(x,w).$ Then, we are interested in the evolution of the random field $g(t,x,w,\xi):\R^+\times \R \times W \times \Omega$ governed by a BGK--kinetic equation \eqref{eq:BGK} with uncertain initial data $g_0.$ For the following derivations it is sufficient to assume that first and second moment $g$ w.r.t. to $w$ exist as well as up to second moment in $\xi$.  Further, the derivation is based on the assumption that the  random field $g$ fulfills \eqref{eq:BGK-stoch} pointwise a.e. in $(t,x,w)$ as well as $f_\Xi$ a.s. in $\xi:$ 

\begin{align}\label{eq:BGK-stoch}
&	\partial_t g(t,x,w,\xi)+\partial_x\Big[ (w-h(\rho(t,x,\xi))) g(t,x,w,\xi)\Big] = \frac{1}{\eps} \Big( M_g(w;\rho(t,x,\xi)) - g(t,x,w,\xi) \Big), \\
&	g(0,x,w,\xi) = g_0(x,w,\xi), \\
&	\rho(t,x,\xi)=\int_W g(t,x,w,\xi) dw. 
\end{align}
Next, we turn to the description of the intrusive approach in order to establish the hierarchy indicated in Figure \ref{fig1}.  A random field $g(t,x,w,\cdot) \in L^2(\Omega, f_\Xi)$ can be expressed by a spectral expansion  \cite{S2}
\begin{equation}\label{eq:gPC1}
	g(t,x,w,\xi)=\sum_{k=0}^{\infty}\Tilde{g}_i(t,x,w)\phi_i(\xi),
\end{equation}
where $\phi_i \in L^2(\Omega, f_\Xi)$ are basis functions, typically chose orthonormal with respect to the weighted scalar product, and $\{ \Tilde{g}_i(t,x,w) \}_{i=0}^\infty$ is a set of coefficients: 
\begin{equation}\label{eq:gPC0}
\Tilde{g}_i(t,x,w) = \int_\Omega g(t,x,w,\xi) \phi_i(\xi) f_\Xi(\xi) d\xi.
\end{equation}
 The previous expansion is truncated at $K$ to obtain an approximation with $K+1$ moments.  The projection of $g(t,x,w,\cdot)$ to the span of the $K+1$ base functions is denoted by 
 \begin{equation}\label{projection}
 	G_K( g(t,x,w,\cdot) )(\xi) := \sum\limits_{i=0}^K \Tilde{g}_i(t,x,w) \phi_i(\xi) \; a.s. \xi \in \Omega.
 \end{equation}
 The expansion \eqref{eq:gPC1} is called generalized polynomial chaos expansion (gPC). In particular, for kinetic equations, also more involved than the given BGK equation, this has been explored recently in a series of paper, see e.g. \cite{K1,K2,Zanella2019,Yuhua2017,Yuhua2018,Carrillo2019,Zanella2020}. Therein, also conditions on $\{g_{0,i} \}_{i=0}^\infty$ have been developed to allow for existence of a (weak) stochastic solution $g$. 
 
Next, we establish the connection between the random BGK model \eqref{eq:BGK} and the stochastic Aw--Rascle--Zhang system. Assume $g$ is a pointwise a.e. and integrable solution to the system \eqref{eq:BGK-stoch}. Then, the density $\rho$ and flux $q$ allow for gPC expansion for all $i\in \mathbb{N}:$  

\begin{align}\label{def:rho}
    \rho(t,x,\xi)=\int_W g(t,x,w,\xi)dw = \sum_{i=0}^{\infty} \Tilde{\rho}_i\phi_i(\xi), \;  \Tilde{\rho}_i=\Tilde{\rho}_i(t,x)=\int_W \Tilde{g}_i(t,x,w) dw, \\
      q(t,x,\xi)=\int_W wg(t,x,w,\xi)dw = \sum_{i=0}^{\infty}  \Tilde{q}_i\phi_i(\xi),  \;  \Tilde{q}_i=\Tilde{q}_i(t,x)=\int_W w \Tilde{g}_i(t,x,w) dw.  \label{def:q}
\end{align}
As in \cite{S5,S16,GersterJCP2019} we introduce the Galerkin production for any finite $K>0$ and any  ${u}, {z} \in L^2(\Omega,f_\Xi)$, 
 $\Tilde{u}=(\Tilde{u}_i)_{i=0}^K$, $\Tilde{z}:=(\Tilde{z}_i)_{i=0}^K$ and for all  $i,j,\ell=0,\dots,K:$
 
\begin{align*}
	\G_K[u, z](t,x;\xi) 
	&:=
	\sum_{k=0}^{K} ( \Tilde{u} \ast \Tilde{z} )_k(t,x) \phi_k(\xi), \\
	(\tilde{u} \ast \Tilde{z})_k(t,x) 
	&:=
	\sum_{i,j=0}^{K} \Tilde{u}_i(t,x) \tilde{z}_j(t,x) \mathcal{M}_\ell, \\
	 \left( \mathcal{M}_\ell \right)_{i,j} &:= \int_\Omega \phi_i(\xi) \phi_j(\xi)\phi_\ell(\xi) f_\Xi(\xi) d\xi. 
\end{align*}
Note that $\mathcal{M}_\ell$ is a symmetric matrix of dimension $(K+1) \times (K+1)$ for any fixed $\ell \in \{0,\dots,K\}.$ The Galerkin product $\G_K$ is not the only possible projection of the product of random variables $u,z$ on the subspace  $span\{ \phi_0, \dots, \phi_K \}.$ However, this choice (and additional assumptions on the base functions) have shown to be sufficient to guarantee hyperbolicity of the $p-$system \cite{GersterJCP2019} as well as the Aw--Rascle--Zhang system \cite{gerster2021stability}. Furthermore, we have   ${\Tilde{u} \ast \Tilde{z} = \P(\Tilde{u}) \Tilde{z}}$ for $\P \in \R^{K+1 \times K+1}$ and $\Tilde{u} \in \R^{K+1}$ defined by 
\begin{equation}\label{DefPalpha}
	\P(\Tilde{u})\coloneqq\sum\limits_{\ell=0}^K \Tilde{u}_\ell\mathcal{M}_\ell.	
\end{equation}
The Galerkin product is symmetric, but not associative ~\cite{S15,S4,S18}. Finally, we assume that the chosen  functions $\{ \phi_i \}_i$ fulfill the following properties \cite[A1-A3]{GersterHertyCicip2020}
	\begin{itemize}
	\item[\textup{(A1)}]
	The  matrices $\mathcal{M}_\ell$ and $\mathcal{M}_k$ commute for all ${\ell,k = 0,\ldots,K}$.
	\item[\textup{(A2)}]
	The matrices~$\P(\widehat{u})$ and $\P(\Tilde{z})$ commute for all ${\Tilde{u}, \Tilde{z} \in \mathbb{R}^{K+1}}$.
	\item[\textup{(A3)}]
	There is an eigenvalue decomposition~${
		\P(\Tilde{u}) = V \mathcal{D}(\Tilde{u}) V^{T}
	}$ 
	with constant eigenvectors $V$.	
\end{itemize}
It has been shown that for example the one--dimensional Wiener--Haar basis and piecewise linear multiwavelets fulfill the previous assumptions, but, Legendre and Hermite polynomials do {\em not} fulfill those requirements. 

\par 

Similar to \cite{K1,K2} and for any fixed $K$ we derive a system of equations for the evolution of $\Tilde{g}_i(t,x,w):\R^+\times\R\times W\to \R$   for $i=0,\dots,K$ by projection the operators of equation \eqref{eq:BGK-stoch} to the space $span\{ \phi_i: i=0,\dots,K \}$. 
We further assume that the set of base functions is orthonormal and fulfills the assumptions (A1)--(A3).

\begin{align}\label{eq:UBGK}
&	\partial_t \ \Tilde{g}_i(t,x,w) + \partial_x   \Bigl( \Bigl( w Id -   \P\left(h(\Tilde{\rho}\left(t,x\right))\right) \Bigr) \Tilde{g}(t,x,w) \Bigr)_i  
	=\frac{1}{\eps} \left( \widetilde{M}_i\left(w;\Tilde{\rho}(t,x) \right) -  \Tilde{g}_i(t,x,w) \right), \\
&	\Tilde{g}_i(0,x,w) = \int_\Omega g_{0}(t,x,w,\xi) \phi_i(\xi) f_\Xi(\xi) d\xi
\end{align}
In the derivation of the previous system \eqref{eq:UBGK} we have used the following  results:  
Under  assumptions (A1)-(A3)  $h$, as given by equation \eqref{h}, fulfills   \cite{gerster2021stability}:

\begin{align}
\sum_{j=0}^K \int_\Omega h\left( \sum_{\ell=0}^K \Tilde{\rho}_\ell \phi_\ell(\xi) 
\right)	\Tilde{g}_j \phi_j(\xi) \phi_i(\xi) f_\Xi(\xi) d\xi = 
\left( \P(h(\Tilde{\rho})) \Tilde{g} \right)_i, \; \forall i=0,\dots,K.
\end{align}
Further, we define for $i=0,\dots,K$ 

\begin{align}
	\widetilde{M}_{i}\left(w;\Tilde{\rho}(t,x) \right):=&\int_\Omega  M_g\Bigl(w;\sum_{\ell=0}^K \Tilde{\rho}_\ell(t,x) \phi_\ell(\xi) \Bigr)\phi_i(\xi)  f_\Xi dw d\xi, 	\label{eq:mg} \\ 
	\widehat{M}_g(w,\Tilde{\rho}(t,x),\xi)  :=&  M_g\left( w;\sum_{i=0}^K\tilde{\rho}_i(t,x)\phi_i(\xi)\right).  
	\label{eq:mi}
\end{align}

\subsection{Derivation of Stochastic Aw--Rascle--Zhang Model}\label{sec:S_ARZ}

In \cite{MR4063909} a connection between  two levels of description, i.e., \eqref{eq:BGK} and \eqref{eq:ARZ} has been established under the assumption that the Maxwellian fulfills \eqref{eq:m1} and \eqref{eq:m2}. The next lemma shows that those assumptions extend to directly to the stochastic case. 

\begin{lemma}\label{lemma1}
  Let $K>0.$ Consider a base functions $\phi_i$ and $i=0,\dots,K$ fulfilling (A1)--(A3). Furthermore, assume that the functions $M_g, V^{eq}$ fulfill the assumptions \eqref{eq:m1}-\eqref{eq:m2}. Let $g$ be expanded in a gPC series with $K+1$ modes as given by equation \eqref{eq:gPC1}.   Then, $\widetilde{\Mg}$  defined by \eqref{eq:mi}  fulfill for any $i=0,\dots,K$, $t \geq0,$ and $x\in\R:$
  
  \begin{align}
&     \int_W \widetilde{M}_i\left(w;\Tilde{\rho}(t,x) \right) \ dw = \Tilde{\rho}_i(t,x),	\label{eq:coeff_um1}\tag{UM1}\\
&     \int_W w \ \widetilde{M}_i\left(w;\Tilde{\rho}(t,x) \right) dw = \Bigl( \P (V_{eq}(\Tilde{\rho}(t,x)))\Tilde{\rho}(t,x)+\P (h(\Tilde{\rho}(t,x)))\Tilde{\rho}(t,x)\Bigr)_i.\label{eq:coeff_um2}\tag{UM2}
\end{align} \end{lemma}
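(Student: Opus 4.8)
The plan is to reduce both identities \eqref{eq:coeff_um1}--\eqref{eq:coeff_um2} to the deterministic moment relations \eqref{eq:m1}--\eqref{eq:m2} by interchanging the $w$- and $\xi$-integrations, and then to rewrite the remaining $\xi$-integrals using orthonormality of the $\phi_i$ and the Galerkin structure encoded by $\P$. Throughout I write $\rho(t,x,\xi)=\sum_{\ell=0}^K \tilde{\rho}_\ell(t,x)\phi_\ell(\xi)$ for the $(K+1)$-mode reconstruction, and all Fubini interchanges are justified by the integrability assumed on $g$ (existence of the first and second $w$-moments and of the second $\xi$-moment) together with the admissible growth of $M_g$, $V_{eq}$ and $h$ in $\rho$.

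For \eqref{eq:coeff_um1} I would start from the definition \eqref{eq:mg}, swap the order of integration, and apply \eqref{eq:m1} pointwise in $\xi$:
\[
\int_W \widetilde{M}_i(w;\tilde{\rho})\,dw
=\int_\Omega\Bigl(\int_W M_g\bigl(w;\textstyle\sum_{\ell=0}^K\tilde{\rho}_\ell\phi_\ell(\xi)\bigr)\,dw\Bigr)\phi_i(\xi)\,f_\Xi(\xi)\,d\xi .
\]
By \eqref{eq:m1} the inner integral equals $\sum_{\ell}\tilde{\rho}_\ell\phi_\ell(\xi)$, so the right-hand side becomes $\sum_{\ell}\tilde{\rho}_\ell\int_\Omega\phi_\ell(\xi)\phi_i(\xi)f_\Xi(\xi)\,d\xi=\sum_{\ell}\tilde{\rho}_\ell\,\delta_{i\ell}=\tilde{\rho}_i$ by orthonormality of $\{\phi_i\}$. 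This is \eqref{eq:coeff_um1}.

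For \eqref{eq:coeff_um2} the same Fubini step, now with the weight $w$ and followed by \eqref{eq:m2}, gives
\[
\int_W w\,\widetilde{M}_i(w;\tilde{\rho})\,dw
=\int_\Omega\Bigl(\rho\,V_{eq}(\rho)+\rho\,h(\rho)\Bigr)\Big|_{\rho=\sum_\ell\tilde{\rho}_\ell\phi_\ell(\xi)}\,\phi_i(\xi)\,f_\Xi(\xi)\,d\xi .
\]
In each of the two summands I would write the prefactor $\rho$ as $\sum_{j=0}^K\tilde{\rho}_j\phi_j(\xi)$ and recognise the resulting term $\sum_{j=0}^K\int_\Omega F\bigl(\sum_\ell\tilde{\rho}_\ell\phi_\ell(\xi)\bigr)\tilde{\rho}_j\phi_j(\xi)\phi_i(\xi)f_\Xi(\xi)\,d\xi$ as an instance of the projection identity for $h$ recalled above from \cite{gerster2021stability} (the one used to derive \eqref{eq:UBGK}), applied with $\tilde{g}$ replaced by $\tilde{\rho}$ and, for the first summand, with $h$ replaced by $F=V_{eq}$. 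This yields $\bigl(\P(V_{eq}(\tilde{\rho}))\tilde{\rho}\bigr)_i+\bigl(\P(h(\tilde{\rho}))\tilde{\rho}\bigr)_i$, which is exactly the right-hand side of \eqref{eq:coeff_um2}.

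The step I expect to be the main obstacle is this last one: the identity cited from \cite{gerster2021stability} is stated there for the pressure $h(\rho)=\rho^\gamma$ as in \eqref{h}, so one must check it survives replacing $h$ by the equilibrium velocity $V_{eq}$. The identity rests on only two structural facts: that the composition $V_{eq}(\rho(\cdot,\cdot,\xi))$ lies in $L^2(\Omega,f_\Xi)$, so that its gPC coefficient vector (denoted $V_{eq}(\tilde{\rho})$) and hence $\P(V_{eq}(\tilde{\rho}))$ are well defined; and that $\mathrm{span}\{\phi_0,\dots,\phi_K\}$ is closed under pointwise multiplication, which is precisely what assumptions (A1)--(A3) guarantee and what lets the projection--multiplication--reprojection collapse onto the matrix $\P$ (true for the Wiener--Haar basis and piecewise linear multiwavelets, false for Legendre or Hermite polynomials). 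No further property of $V_{eq}$ is used, so, granting the modelling hypotheses that make $V_{eq}(\rho(\cdot))$ square-integrable in $\xi$ --- the only assumption needed beyond those of the lemma --- the argument is verbatim the $h$-case.
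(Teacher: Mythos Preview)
Your proposal is correct and follows the same approach as the paper: swap the $w$- and $\xi$-integrations, apply \eqref{eq:m1}--\eqref{eq:m2} pointwise in $\xi$, then project using orthonormality for \eqref{eq:coeff_um1} and the Galerkin identity quoted from \cite{gerster2021stability} for \eqref{eq:coeff_um2}. The paper in fact omits the \eqref{eq:coeff_um2} computation entirely as ``similar'', so your write-up is more detailed than the original, and your closing remark on why the $h$-identity carries over with $V_{eq}$ in place of $h$ addresses a point the paper leaves implicit.
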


\begin{proof}
Due to  \eqref{eq:m1}-\eqref{eq:m2} and \eqref{def:rho} we obtain  for a.e. $(t,x,\xi) \in \R^+\times\R\times\Omega.$

\begin{align}
	\int_W M_g (w;\rho(t,x,\xi)) \ dw &= \rho(t,x,\xi),	\label{eq:um1}\\
	\int_W w \ M_g(w;\rho(t,x,\xi))\ dw &=\rho(t,x,\xi) V_{eq}(\rho(t,x,\xi))+\rho(t,x,\xi) h(\rho(t,x,\xi)).\label{eq:um2}
\end{align}
Integration with respect to $dw$  yields 

\begin{align*}
	\int_W \widetilde{{M}_i}(w,\Tilde{\rho}(t,x)) \ dw = 
	\int_\Omega \int_W {M}_g\Bigl(w; \sum_{j=0}^K \Tilde{\rho}_j(t,x) \phi_j(\xi) \Bigr) 
	\phi_i(\xi) dw f_\Xi(\xi) d\xi= \\ \int_\Omega \sum_{j=0}^K \Tilde{\rho}_j(t,x) \phi_j(\xi) \phi_i(\xi) f_\Xi(\xi) d\xi=  \Tilde{\rho}_i(t,x). 
\end{align*}
The similar computation yielding \eqref{eq:coeff_um2} is omitted.
\end{proof}

In the following result we derive a gPC formulation of the fluid model obtained  by the stochastic BGK model \eqref{eq:BGK-stoch}. Further, we compare this model with the stochastic Aw--Rascle--Zhang model derived in \cite{gerster2021stability}. The theorem shows that under assumption \eqref{eq:ass_thm1} the derived gPC  model is equivalent to the stochastic model of \cite{gerster2021stability}. Therein, it has also been shown that the partial differential equation is hyperbolic.  

\begin{theorem}\label{thm1}
Let $K>0, \epsilon>0$. Assume the base functions $\{ \phi_0, \dots, \phi_K \}$ fulfill (A1)--(A3) and assume that the functions $M_g, V^{eq}$ fulfill the assumptions \eqref{eq:m1}-\eqref{eq:m2} and let $h(\cdot)$ fulfill \eqref{h}. Let $\Tilde{g}_i$ be a strong solution to \eqref{eq:UBGK} and \eqref{eq:mg} for $i=0,\dots,K.$ 
\medskip
Further, assume that for $i=0,\dots,K$ and $(t,x) \in \R^+\times \R$  
\begin{equation}\label{eq:ass_thm1}
	\int_W w^2 \; \Tilde{g}_i(t,x,w) dw= (\P(\Tilde{q}(t,x))\P^{-1}(\Tilde{\rho}(t,x))\Tilde{q}(t,x))_i,
\end{equation}
where $(\Tilde{\rho},\Tilde{q})_i$ are the first and second moment of $\Tilde{g}_i$ as in \eqref{def:rho}--\eqref{def:q} and $\P$ is defined by \eqref{DefPalpha}. 
\par 
Then, the functions $(\Tilde{\rho},\Tilde{q})$ formally fulfill pointwise in $(t,x)\in\R^+\times \R$ and for all $i=0,\dots,K$ the second--order traffic flow model 

  \begin{subequations} \label{eq:UARZ}
  \begin{align}
   &   \partial_t \Tilde{\rho}_i(t,x) + \partial_x \left[\Tilde{q}_i(t,x)- (\P(\Tilde{\rho}(t,x))\Tilde{\rho}(t,x))_i\right]=0\\
   &   \partial_t \Tilde{q}_i(t,x) + \partial_x \left[ (\P(\Tilde{q}(t,x))\P^{-1}(\Tilde{\rho}(t,x))\Tilde{q}(t,x))_i - (\P(\Tilde{\rho}(t,x))\Tilde{q}(t,x))_i \right]= \\
	& \quad \frac1\epsilon \Bigl( \Bigl( \P (V_{eq}(\Tilde{\rho}(t,x)))\Tilde{\rho}(t,x)+\P (h(\Tilde{\rho}(t,x)))\Tilde{\rho}(t,x)\Bigr)_i  - \Tilde{q}_i(t,x) \Bigr) \\
   & \Tilde{\rho}_i(0,x) = \int_W \Tilde{g}_{0,i}(t,x,w) dw, \\
   & \Tilde{q}_i(0,x) = \int_W w \; \Tilde{g}_{0,i}(t,x,w) dw.  
 \end{align}\end{subequations}
\par  
The system \eqref{eq:UARZ} is hyperbolic for $\Tilde{\rho}_i>0.$
\par 
Let the  random fields $(\rho,q)=(\rho,q)(t,x,\xi):\R^+\times\R\times\Omega \to \R^2$  be a pointwise a.e.\ solution with second moments w.r.t.\ to $\xi$ of  the stochastic Aw--Rascle--Zhang system with random initial data: 

  \begin{subequations} \label{eq:UARZ2}
	\begin{align}
		&		\partial_t{\rho}+\partial_x({q}-{\rho} h({\rho}))=0, \\
		&		\partial_t {q}+\partial_x \Bigl(\frac{{q^2}}{{\rho}} - {q} h({\rho})\Bigr)=\frac{1}{\epsilon}\left( \rho V_{eq}({\rho}) + \rho h(\rho) -{q}\right), \\
		&\rho(0,x,\xi)=\rho_0(x,\xi), \;  q(0,x,\xi)=q_0(x,\xi). 
 \end{align}\end{subequations}
Under the previous assumptions on the base functions $\{ \phi_0, \dots, \phi_K\}$ and provided that for all $i=0,\dots,K$ 

\begin{align}\label{cond1}
	\int_\Omega\rho_0(x,\xi) \phi_i(\xi) f_\Xi d\xi = \int_W\Tilde{g}_{0,i}(t,x,w) dw, \; 
	\int_\Omega q_0(x,\xi) \phi_i(\xi) f_\Xi d\xi = \int_W w \Tilde{g}_{0,i}(t,x,w) dw, 
\end{align} 
we have 

\begin{align}\label{thm:final}
	G_K\left(\rho(t,x,\cdot) \right)(\xi)=\sum\limits_{i=0}^K \Tilde{\rho}_i(t,x)\phi_i(\xi) \mbox{ and }  G_K\left(q(t,x,\cdot)\right)(\xi)= \sum\limits_{i=0}^K \Tilde{q}_i(t,x)\phi_i(\xi),
\end{align}
where $(\Tilde{\rho},\Tilde{q})$ fulfill equation \eqref{eq:UARZ}. 
\end{theorem}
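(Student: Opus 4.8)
The plan is to prove the theorem in three parts, matching its three assertions: (i) derive the gPC fluid system \eqref{eq:UARZ} from the projected BGK system \eqref{eq:UBGK}; (ii) verify hyperbolicity of \eqref{eq:UARZ}; (iii) identify \eqref{eq:UARZ} with the gPC-projection of the stochastic Aw--Rascle--Zhang system \eqref{eq:UARZ2}.

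First I would derive \eqref{eq:UARZ} by taking moments of \eqref{eq:UBGK} in $w$. Integrating \eqref{eq:UBGK} over $W$ and using $\int_W \Tilde g_i\,dw=\Tilde\rho_i$, $\int_W w\,\Tilde g_i\,dw=\Tilde q_i$, the transport term $\partial_x\big((w\,Id-\P(h(\Tilde\rho)))\Tilde g\big)_i$ contributes $\partial_x\big(\Tilde q_i-(\P(h(\Tilde\rho))\Tilde\rho)_i\big)$; with $h(\rho)=\rho^\gamma$ and the property $\P(h(\Tilde\rho))\Tilde\rho = \Tilde\rho\ast\dots$ one recovers the flux written as $(\P(\Tilde\rho(t,x))\Tilde\rho(t,x))_i$ when $\gamma=1$ (and the analogous expression for $\gamma=2$, which the paper's notation in \eqref{eq:UARZ} absorbs into $\P(\Tilde\rho)\Tilde\rho$). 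The relaxation term vanishes on the right by \eqref{eq:coeff_um1} of Lemma \ref{lemma1}. Multiplying \eqref{eq:UBGK} by $w$ and integrating gives the $\Tilde q_i$ equation: the second-moment term $\int_W w^2\Tilde g_i\,dw$ is replaced, precisely by the closure hypothesis \eqref{eq:ass_thm1}, by $(\P(\Tilde q)\P^{-1}(\Tilde\rho)\Tilde q)_i$; the term $\int_W w\,(\P(h(\Tilde\rho))\Tilde g)_i\,dw = (\P(h(\Tilde\rho))\Tilde q)_i = (\P(\Tilde\rho)\Tilde q)_i$; and the right-hand side becomes $\frac1\epsilon\big((\P(V_{eq}(\Tilde\rho))\Tilde\rho+\P(h(\Tilde\rho))\Tilde\rho)_i-\Tilde q_i\big)$ by \eqref{eq:coeff_um2}. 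The initial conditions follow by integrating $\Tilde g_{0,i}$ against $1$ and $w$. This step is essentially bookkeeping once \eqref{eq:ass_thm1} is invoked; the subtlety is that the closure is an assumption, not a consequence, exactly as in the deterministic Chapman--Enskog picture.

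For hyperbolicity I would invoke (A1)--(A3): under these, all the matrices $\P(\Tilde u)$ are simultaneously diagonalizable by the constant orthogonal $V$, so writing $\Tilde\rho = V\widehat\rho$, $\Tilde q = V\widehat q$ decouples \eqref{eq:UARZ} into $K+1$ scalar Aw--Rascle--Zhang systems in the characteristic variables $(\widehat\rho_i,\widehat q_i)$, each of which is hyperbolic for $\widehat\rho_i>0$ by the classical analysis; equivalently one cites \cite[Theorem \ref{thm1}]{gerster2021stability} directly, since \eqref{eq:UARZ} is literally the system treated there. I would state this as the cleanest route: the system \eqref{eq:UARZ} coincides with the gPC-ARZ system of \cite{gerster2021stability}, whose hyperbolicity for $\Tilde\rho_i>0$ is already established.

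The third part is the identification \eqref{thm:final}. Here the argument is: apply the Galerkin projection $G_K$ to the stochastic ARZ system \eqref{eq:UARZ2}, using the Galerkin product $\G_K$ as the surrogate for the genuine products $\rho h(\rho)$, $q^2/\rho$, etc.; by construction this yields exactly the system \eqref{eq:UARZ} for the coefficients of $G_K(\rho)$, $G_K(q)$ — this is the content, under (A1)--(A3) and the fact \eqref{eq:mi}/\eqref{eq:mg} about projecting $h$, already cited from \cite{gerster2021stability}. The initial data match by hypothesis \eqref{cond1}, which says the gPC coefficients of $(\rho_0,q_0)$ equal the moments $\int_W \Tilde g_{0,i}\,dw$, $\int_W w\,\Tilde g_{0,i}\,dw$ that serve as initial data for \eqref{eq:UARZ}. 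Since both $(\Tilde\rho_i,\Tilde q_i)$ from \eqref{eq:UARZ} and the coefficients of $(G_K(\rho),G_K(q))$ satisfy the same (strongly hyperbolic, hence well-posed for smooth data) first-order system with the same initial data, uniqueness of strong solutions forces them to coincide, giving \eqref{thm:final}. The word "formally" in the statement signals that I should not belabor the functional-analytic well-posedness of \eqref{eq:UARZ2} or the interchange of $G_K$ with the nonlinearities beyond what \eqref{eq:ass_thm1} and the cited lemmas license; I would make the matching of equations and initial data explicit and invoke uniqueness at the formal level.

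The main obstacle I expect is not any single computation but the closure identity \eqref{eq:ass_thm1}: it is the stochastic analogue of the Chapman--Enskog-type equilibrium closure $\int w^2 M_g\,dw \approx q^2/\rho$, and one must be careful that the Galerkin product $\P(\Tilde q)\P^{-1}(\Tilde\rho)\Tilde q$ — which requires $\P(\Tilde\rho)$ invertible, i.e.\ genuinely $\Tilde\rho_i>0$ in the matrix sense — is the \emph{correct} surrogate for $q^2/\rho$ under $G_K$, and that non-associativity of $\ast$ does not spoil the match between the moment equation coming from \eqref{eq:UBGK} and the projected equation coming from \eqref{eq:UARZ2}. Assumptions (A1)--(A3) are exactly what make $\P(\Tilde q)\P^{-1}(\Tilde\rho)\Tilde q$ well-defined and symmetric enough for this, so the proof should emphasize where each of (A1)--(A3) enters.
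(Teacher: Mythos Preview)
Your proposal is correct and follows essentially the same approach as the paper: take the zeroth and first $w$-moments of \eqref{eq:UBGK}, invoke Lemma~\ref{lemma1} (\eqref{eq:coeff_um1} and \eqref{eq:coeff_um2}) for the source terms and the closure assumption \eqref{eq:ass_thm1} for the second moment, then cite \cite{gerster2021stability} for both hyperbolicity and the identification \eqref{thm:final}. The paper's own proof is even terser on parts (ii) and (iii), simply referring to \cite{gerster2021stability} without the diagonalization sketch or the uniqueness argument you outline, but your elaboration is consistent with what that citation contains.
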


Some remarks are in order. The assumption \eqref{eq:ass_thm1} is a closure relation and has been presented in the deterministic case  \cite{MR4063909}.  The result on hyperbolicity of the system \eqref{eq:UARZ} has been presented in \cite{gerster2021stability}. Therein, also the  system for the coefficients $\Tilde{\rho},\Tilde{q}$ of a gPC expansion of the stochastic case of \eqref{eq:ARZ} has been derived, i.e., 
the system \eqref{eq:UARZ}. Condition \eqref{cond1} states the consistency of  initial data of both systems.   
 
\begin{proof}
	The proof is similar to \cite{MR4063909} and given here for completeness. For a pointwise a.e. solution $\Tilde{g}$ and corresponding densities $\Tilde{\rho}$ and fluxes $\Tilde{q}$ according to \eqref{def:rho}--\eqref{def:q}
we obtain for each $i\in \{0,\dots,K\}$ 	by  \eqref{eq:UBGK} and  after integration  on $W$ 

\begin{align}
    \partial_t &  \int_W   \Tilde{g}_i(t,x,w)  dw  + \partial_x    \int_W w \Tilde{g}_i(t,x,w) - \Bigl(\P(h(\Tilde{\rho}(t,x)))\Tilde{g}(t,x,w) \Bigr)_i  dw \\
    &=\frac{1}{\eps} \left( \int_W \Tilde{M}_i(w;\Tilde{\rho}(t,x)) - \Tilde{g}_i(t,x,w) dw \right). 
\end{align}
Since $\Tilde{g} \to \P(\Tilde{\rho})\Tilde{g}$ is linear and by equation \eqref{eq:coeff_um1} of Lemma \ref{lemma1} 

\begin{align} 
	\partial_t \Tilde{\rho}_i(t,x)  + \partial_x \left(\Tilde{q}_i(t,x) - \Bigl(\P(h(\Tilde{\rho}(t,x)))\Tilde{\rho}(t,x) \Bigr)_i \right) = 0.
\end{align}
Furthermore, we integrate \eqref{eq:UBGK}   w.r.t.\ to $w\, dw$ on $W$ to obtain 

\begin{align}
	\partial_t &  \int_W  w \Tilde{g}_i(t,x,w)  dw  + \partial_x    \int_W w^2 \Tilde{g}_i(t,x,w) - w \Bigl(\P(h(\Tilde{\rho}(t,x)))\Tilde{g}(t,x,w) \Bigr)_i  dw \\
	&=\frac{1}{\eps} \left( \int_W w \widetilde{M}_i(w;\Tilde{\rho}(t,x)) - w \Tilde{g}_i(t,x,w) dw \right). 
\end{align}
This yields 

\begin{align}
	\partial_t &  \Tilde{q}_i(t,x)  + \partial_x    \int_W w^2 \Tilde{g}_i(t,x,w) dw -  \Bigl(\P(h(\Tilde{\rho}(t,x)))\Tilde{q}(t,x) \Bigr)_i  \\
	&=\frac{1}{\eps} \left( \int_W w \widetilde{M}_i(w;\Tilde{\rho}(t,x)) dw -  \Tilde{q}_i(t,x) \right). 
\end{align}
Using now \eqref{eq:ass_thm1} and equation \eqref{eq:um2} of Lemma \ref{lemma1} we obtain the momentum equation of the second--order traffic flow model \eqref{eq:UARZ}. 
\par 
Under the assumptions (A1)--(A3) we obtain \eqref{eq:UARZ} is  hyperbolic as proven  in \cite{gerster2021stability}. Therein, also the assertion \eqref{thm:final} has been established. 
\end{proof}

\begin{remark} 
Introducing stochasticity also allows for more general Maxwellians. In particular, the Maxwellian $M_g$ could also depend on $\xi$ directly. Hence, we may assume that 

\begin{align} 
	 M_g(w,\xi;\rho) := {M}(w;{\rho}(t,x,\xi),\xi). 
\end{align}
The previous derivation can be also conducted for Maxwellians of the previous type. 
In order to conserve mass it is necessary to assume that $M$ fulfills \eqref{eq:coeff_um1}. 
Then, we obtain a gPC expansion in coefficients $\Bar{\rho}=(\Bar{\rho}_i)_{i=0}^K$ 
and  $\Bar{q}=(\Bar{q}_i)_{i=0}^K$ as 

\begin{subequations} \label{eq:UQBGK}
  \begin{align}
   &   \partial_t \Bar{\rho}_i(t,x) + \partial_x \left[\Bar{q}_i(t,x)- (\P(\Bar{\rho}(t,x))\Bar{\rho}(t,x))_i\right]=0\\
   &   \partial_t \Bar{q}_i(t,x) + \partial_x \left[ \int_W w^2 \Bar{g}_i(t,x,w)\, dw - (\P(\Bar{\rho}(t,x))\Bar{q}(t,x))_i \right]= 
	 \frac1\epsilon \Bigl( \int_W w\overline{M}_i \, dw - \Bar{q}_i(t,x) \Bigr) \\
   & \Bar{\rho}_i(0,x) = \Tilde{\rho}_i(0,x) \; 
 \Bar{q}(0,x) = \Tilde{q}_i(0,x),\; 
 \overline{M}_i = \int_\Omega M(w;\rho(t,x,\xi),\xi) \phi_i(\xi) f_\Xi(\xi) d\xi.
 \end{align}\end{subequations}
Clearly, applying assumption \eqref{eq:ass_thm1} leads for the transport part of the system the same flux as for the Aw--Rascle--Zhang system. A direct identification of the source term with fluid dynamic quantities is no longer possible but depends on the precise dependence of $M$ on $\rho$ and $\xi.$ 
\end{remark} 

\subsection{Stability analysis}\label{sec:mu}

We extend the stability analysis \cite[Section 3.2]{MR4063909} to the stochastic case. Recall,  the stochastic PDE \eqref{eq:BGK-stoch} for the random field $g=g(t,x,w,\xi)$ is given by 

\begin{align}\label{eq:BGK-stoch2}
	&	\partial_t g(t,x,w,\xi)+\partial_x\Big[ (w-h(\rho(t,x,\xi))) g(t,x,w,\xi)\Big] = \frac{1}{\eps} \Big( M_g(w;\rho) - g(t,x,w,\xi) \Big), 
\end{align} 
 where we assume that $M_g$ fulfills \eqref{eq:m1} and \eqref{eq:m2}. The stability analysis in \cite{MR4063909} is based on a Chapman Enskog expansion of $g$ in terms of $\epsilon.$ Here, we similarly assume that 
 
 \begin{align}
 g(t,x,w,\xi)=\Mg(w;\rho(t,x,\xi))+\varepsilon g_1(t,x,w,\xi),
 \end{align}
where a.e. $(x,w)$ and a.s. in $\xi$ 

\begin{align}
	\int_W g_1(t,x,w,\xi) dw = 0.
\end{align}
  Up to terms of order $\epsilon^2$ the perturbation $g_1$ fulfills 
  
 \begin{align}\label{g1}
 	g_1(t,x,w,\xi) = - \partial_t M_g(w;\rho(t,x,\xi)) - \partial_x \left( w - h(\rho(t,x,\xi))
 	\right) M_g(w;\rho(t,x,\xi)).
 \end{align} 
The formal computations presented in \cite[Section 3.2]{MR4063909} extend to the 
above equations \eqref{eq:BGK-stoch2} and \eqref{g1} since they only rely on integration with respect to $w$ and the properties \eqref{eq:m1} and \eqref{eq:m2}. Those are  independent of $\xi$.   Hence, after integrating \eqref{eq:BGK-stoch2} with respect to $w$, substituting $g_1$ by \eqref{g1} as well as subsequent differentiation leads to 

\begin{align}
&\partial_t \rho + 
\partial_x \left( \rho V_{eq}(\rho) \right) = \epsilon 
\partial_x \left( - D(\rho) \partial_x \rho + \partial_x \int_W w^2 M_g(w,\rho) dw 
\right), \label{eq01} \\
& D(\rho) = \left( \partial_\rho Q_{eq}(\rho) + \partial_\rho ( h(\rho) \rho ) 
\right) \left(  \partial_\rho Q_{eq}(\rho) + h(\rho)
\right) + \partial_\rho h(\rho) \left( Q_{eq}(\rho) + h(\rho)\rho
\right) \\
& Q_{eq}(\rho)  = V_{eq}(\rho) \rho, 
\end{align}
where $\rho=\rho(t,x,\xi)$. In \cite{MR4063909} it is assumed that the Maxwellian $M_f$, see Section 2, and $M_g$ are related. In this case

\begin{align}\label{MfMg}
	M_f(v,\rho):=M_g(v+h(\rho),\rho) \; \forall v \in V, \rho \geq 0,
\end{align}
 where $M_f$ is a Maxwellian such that $\int_V M_f(v,\rho) dv = \rho$ and 
 $\int_V v M_f(v,\rho) dv = Q_{eq}(\rho).$ Using those properties equation \eqref{eq01} simplifies and we obtain 
 
 \begin{align}
 		&\partial_t \rho + 
 		\partial_x \left( \rho V_{eq}(\rho) \right) = \epsilon 
 		\partial_x \left(  \mu(\rho) \partial_x \rho \right), \label{eq02} \\
 		& \mu(\rho) = \left(  - \partial_\rho Q_{eq}(\rho)^2 - \partial_\rho  h(\rho) \partial_\rho Q_{eq}(\rho) \rho + Q_{eq}(\rho) \partial_\rho h(\rho)     
 		\right) + \int_V v^2 \partial_\rho M_f(v,\rho) dv. \label{eq_mu}
 \end{align}
Note that in the presented case $\mu$ is in fact a random field through its dependence on 
$\rho=\rho(t,x,\xi).$ Therefore, compared with the deterministic case, we may now infer information on e.g. expectation, confidence bands or the probability of $\mu$ to be non-positive. In particular, the later is relevant for qualitative assessment of traffic flow since it shows where possible instabilities may occur. Hence,  for fixed $t\geq 0$ and $x\in \R$ consider 

\begin{align}\label{Pmu}
	\mathbb{P}_{t,x}(\mu \leq 0 ) := \int_\Omega H(-\mu(\rho(t,x,\xi)) f_\Xi(\xi) d\xi.
\end{align}
 It has been argued  in \cite{MR4063909} that \eqref{Pmu} indicates regions of traffic situations of high risk. Further, points  $(t,x)$ where $\mathbb{P}_{t,x}(\cdot) >0$ holds might lead to the rise of stop--and--go waves.  A numerical investigation of \eqref{Pmu} will be presented in the forthcoming section. 
\par 
Note that the computation of \eqref{Pmu} requires to reconstruct the stochastic field $\rho(t,x,\xi)$. This can be obtained by reconstruction of $g$ given by \eqref{eq:gPC1} where
$\Tilde{g}$ are given by equation \eqref{eq:UBGK}.

For particular choices of $V_{eq}(\cdot)$ and $h(\cdot)$ the gPC expansion of the first terms in $\mu$ can be obtained directly using the moments $\Tilde{\rho}.$ In fact, assume $h(\rho)=\rho$ and $V_{eq}=\rho_{max} -\rho.$  Then, $Q_{eq}=\rho(\rho_{max}-\rho)$ is the flux of the Lighthill-Whitham--Richards model and equation \eqref{eq02} simplifies

\begin{align}
	&	\mu(\rho) = R(\rho)+\int_V v^2 \partial_\rho M_f(v,\rho) dv,\\
	& R(\rho) :=   - (\rho_{max}-2\rho)^2 - (\rho_{max} \rho -\rho^2)  + \rho (\rho_{max}-\rho). 
\end{align}
Hence, we obtain 

\begin{align} 
	G_K( R(\rho(t,x,\cdot)))(\xi) = \sum_{i=0}^K \Tilde{R}_i(t,x) \phi_i(\xi)
\end{align}
where $\Tilde{R}$ is expressed in terms of $\Tilde{\rho}$ and $\mathbb{1}=(1,\dots,1)^T \in \R^{K+1}$ 

\begin{align}
	\Tilde{R}(t,x) = - P(\rho_{max} \mathbb{1} -2 \tilde{\rho} )(\rho_{max} \mathbb{1} - 2 \Tilde{\rho}) - \rho_{max} \Tilde{\rho} + P(\Tilde{\rho})\Tilde{\rho} + P(\Tilde{\rho})(\rho_{max}\mathbb{1}-\Tilde{\rho})
\end{align}
However, in  the numerical simulations we use an Maxwellian $M_f$ obtained by a discrete velocity model, see below for the details. Therein,  $\rho$ enters within a rational polynomial and a simple expression as above also for the expansion of $\partial_\rho M_f(v,\rho) dv$ seems to be not possible.

\section{Computation results}
 Numerically, we are interested in indicating and forecast,  regions of high risk of congestion or instabilities. For this reason we focus on the simulation of \eqref{Pmu}. First,  we perform a steady state analysis and investigate parameters influencing regions of high probability. 
Secondly, we show the evolution of this probability in time. 
\par 
As Maxwellian we choose a discrete velocity distribution with $N$ velocities as in  \cite{MR4063909}:

\begin{equation}
	M_f(v;\rho)=\sum_{j=1}^N f^{\infty}_j(\rho)\delta_{v_j}(v).
\end{equation}
The weights are normalized to ensure equation \eqref{eq:um1}, i.e.,  $\sum_{j=1}^N f^{\infty}_j(\rho)=\rho$ for any $\rho>0$. The set of velocities is $\{v_j\}_{j=1}^N$. Then, for fixed $\rho>0$ the weights are recursively defined by 

\begin{align}
	f^\infty_j=&   \begin{cases}
		0 & \rho \ge\frac{1}{2} \\
		\frac{-2(1-\rho )\sum_{k=1}^{j-1} f^{\infty}_k+(1-2\rho )\rho+\sqrt{[(1-2\rho )\rho -2(1-\rho )\sum_{k=1}^{j-1}f^{\infty}_k ]^2 +4\rho (1-\rho )\rho f^{\infty}_k}}{2(1-\rho )} & else
	\end{cases}, \\& j=1,\dots, N - 1, \nonumber \\
f^\infty_N=&\rho-\sum_{j=1}^{N-1}f^\infty_j. 
\end{align}
We use $V_{eq}=1-2 \rho$ and $h(\rho)$ as indicated in the tests below. The Maxwellian $M_g$ is obtained through relation \eqref{MfMg}. Since the previous Maxwellian is a rational polynomial of $\rho,$ an explicit expression of $\mu(\cdot)$ in terms of the moments of $\rho$ might not be feasible. Therefore, we evaluate $\eqref{Pmu}$ numerically using quadrature with $N_\xi$ number of points. 
\par 
The gPC Aw--Rascle--Zhang system is discretized as in \cite{gerster2021stability}, i.e., employing a  local Lax Friedrichs scheme to solve \eqref{eq:UARZ}. 
\par 
The numerical parameters are as follows.  We consider the space interval $x\in [a,b]=[0,2]$ and define the uniform spatial grid of size  $\Delta x=2\cdot 10^{-2}$. Moreover, let $T_f=1$ be the final time of the simulations and $\Delta t$ the time step, which is chosen in such a way that the CFL condition is fulfilled. By $N_t$ we denote the number of the time steps needed to reach $T_f$. The random variable $\omega$ is assumed be uniform distributed on $(0,1)$, i.e., $f_\Xi=1$ and $\Omega=(0,1).$ As basis functions we consider the Haar basis, which are known to fulfill (A1)--(A3). The numerical quadrature of \eqref{Pmu} is conducted with a uniform discretization of $(0,1)$ in $\xi$ with  $N_\xi=10^4$  quadrature nodes. Whenever necessary the random fields   density and flux are   approximated  up to a specified order $K$ by  $\rho(t,x,\xi)= \sum_{i=0}^{K} \Tilde{\rho}_i(t,x) \phi_i(\xi)$ and similarly for $q(t,x,\xi).$

\subsection{Steady state analysis}

The Maxwellian $M_f$ depends on two parameters, the number of discrete velocities $n_v$ governing the level of description of traffic as well as the local density $0<\rho<1.$ In the steady state case the density $\rho$ has been a constant in the deterministic case \cite{MR4063909}, however,  it is now  a random parameter. Since we are interested in the stability of traffic patterns we setup the steady state problem as follows: We assume a constant traffic density $\rho_0$ that is perturb by (possibly small) perturbation

\begin{align}\label{rhoxi}
	\rho(\xi) = \rho_0 + \sigma (\xi-\frac12), 
\end{align}
where $\sigma>0$ controls the standard deviation and the factor $\frac12$ is included to have zero mean for $\xi$ uniformly distributed.  We are interested in the probability $\eqref{Pmu}$ for the previous choice of $\rho$ and $\rho_0 \in [0.1; 0.9]$ with $\sigma=0.1$. The resulting $\mathbb{P}(\mu\leq0)$ is shown in Figure \ref{fig:test2} (red curve). 
\par 
 In the free flow regime, i.e., $\rho_0<\frac12$, the probability of instabilities is zero,  it is increasing until its maximum transition regime, and decreasing in the congested are $\rho>\frac12$. It is interesting to observe that  in the congested region the probability of $\mu<0$ is close to zero with the interpretation that the traffic propagates at  low speed and  and no free space to accelerate. As expected, the highest probability for instabilities is  in the transitional regime $\rho_0 \approx (0.5, 0.6)$. 
\par 
Moreover, in Figure \ref{fig:test2}, we compare the predictions for $n_v=3$ and $n_v=10$. In case of a small and large number of discrete velocities. In case of only three velocities the transition region stretches up to the maximal density due to the limit choices of velocities the drivers can attain. For $n_v=10$, we observe the highest probability for $\rho$ between $0.5$ and $0.6$ as detailed above. 

\begin{figure}[h!]
\centering
	\includegraphics[width=0.62\textwidth]{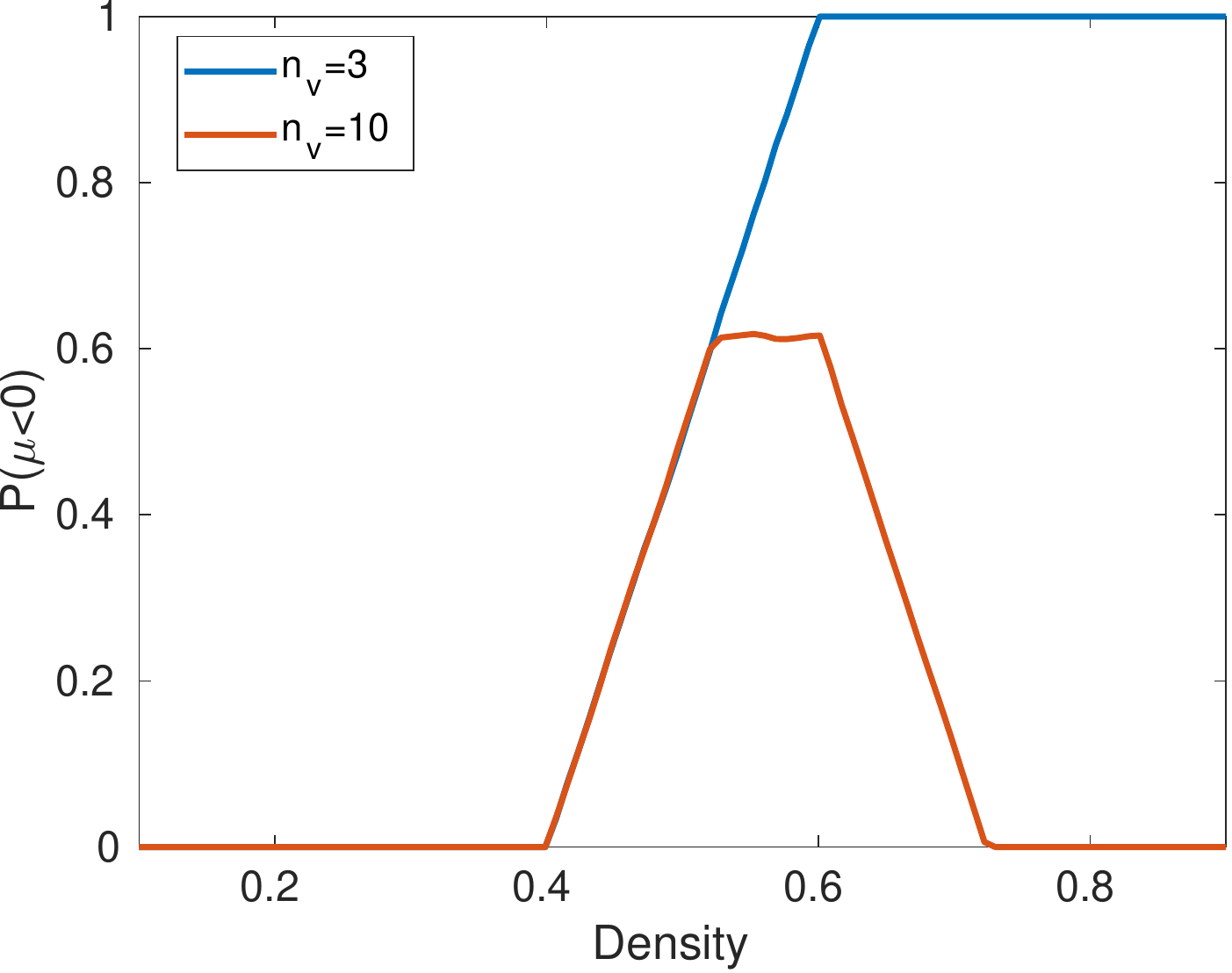}
	\caption{Probability \eqref{Pmu} for a Maxwellian $M_f$ with different number of discrete velocities: $n_V=3$, $n_V=10$. On the x-axis $\rho_0$ is shown, see \eqref{rhoxi}. }
	\label{fig:test2}
\end{figure}

Further, the dependence of  $\mathbb{P}(\mu<0)$ for fixed values of $\rho_0$ but different standard deviations $\sigma$ is shown. We observe a different behavior if we start from $\rho_0=0.4$ and $\rho_0=0.6$, see Figure \ref{fig:n_var_uni}. In the latter, the probability is decreasing with the possible explanation that  the density is spreading far from the  transition area. In the former, we are in the opposite situation, since we  approach the  transition region for increasing value for the standard deviation.  

\begin{figure}[h!]
	\includegraphics[width=0.48\textwidth]{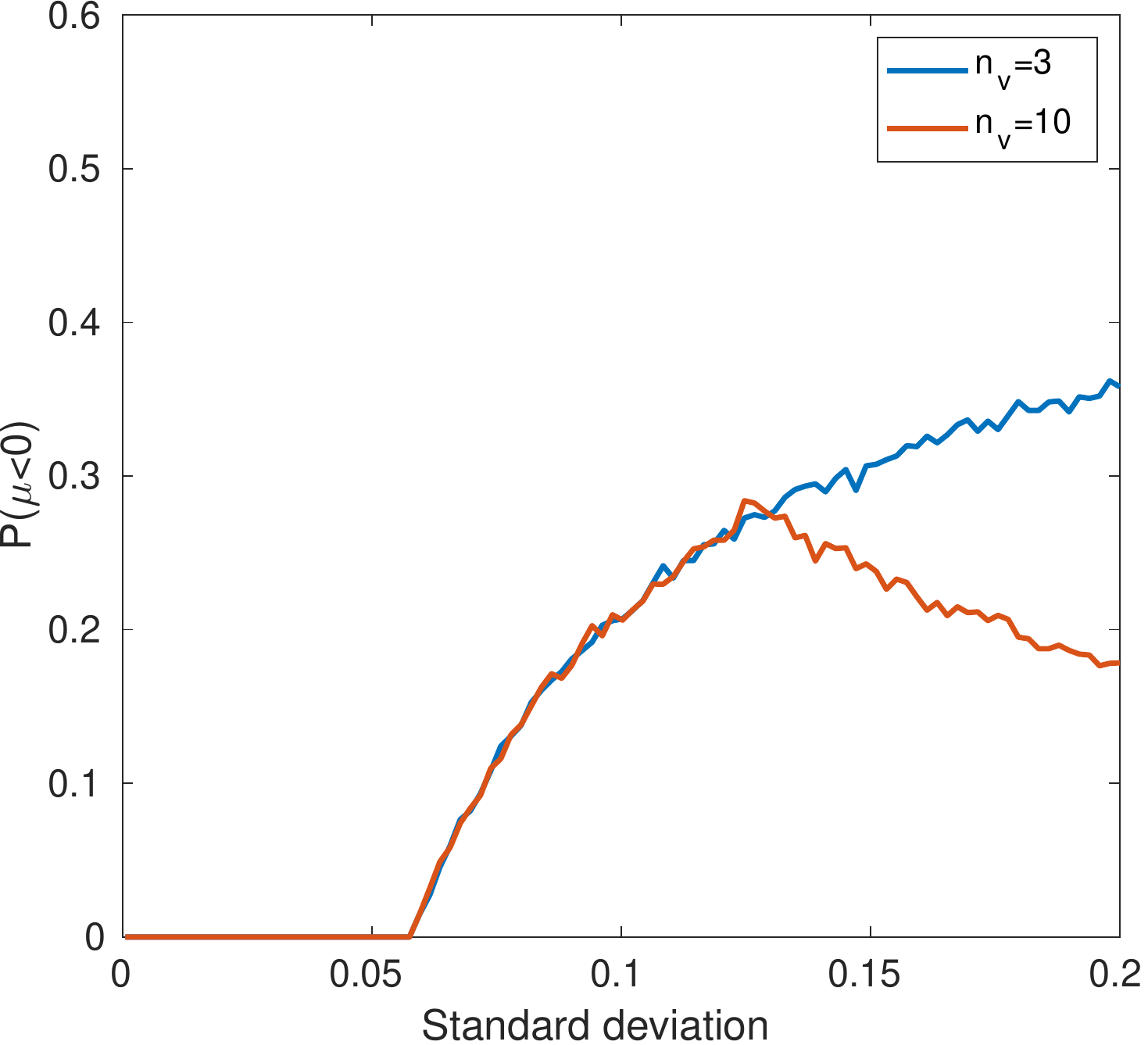}
	\includegraphics[width=0.52\textwidth]{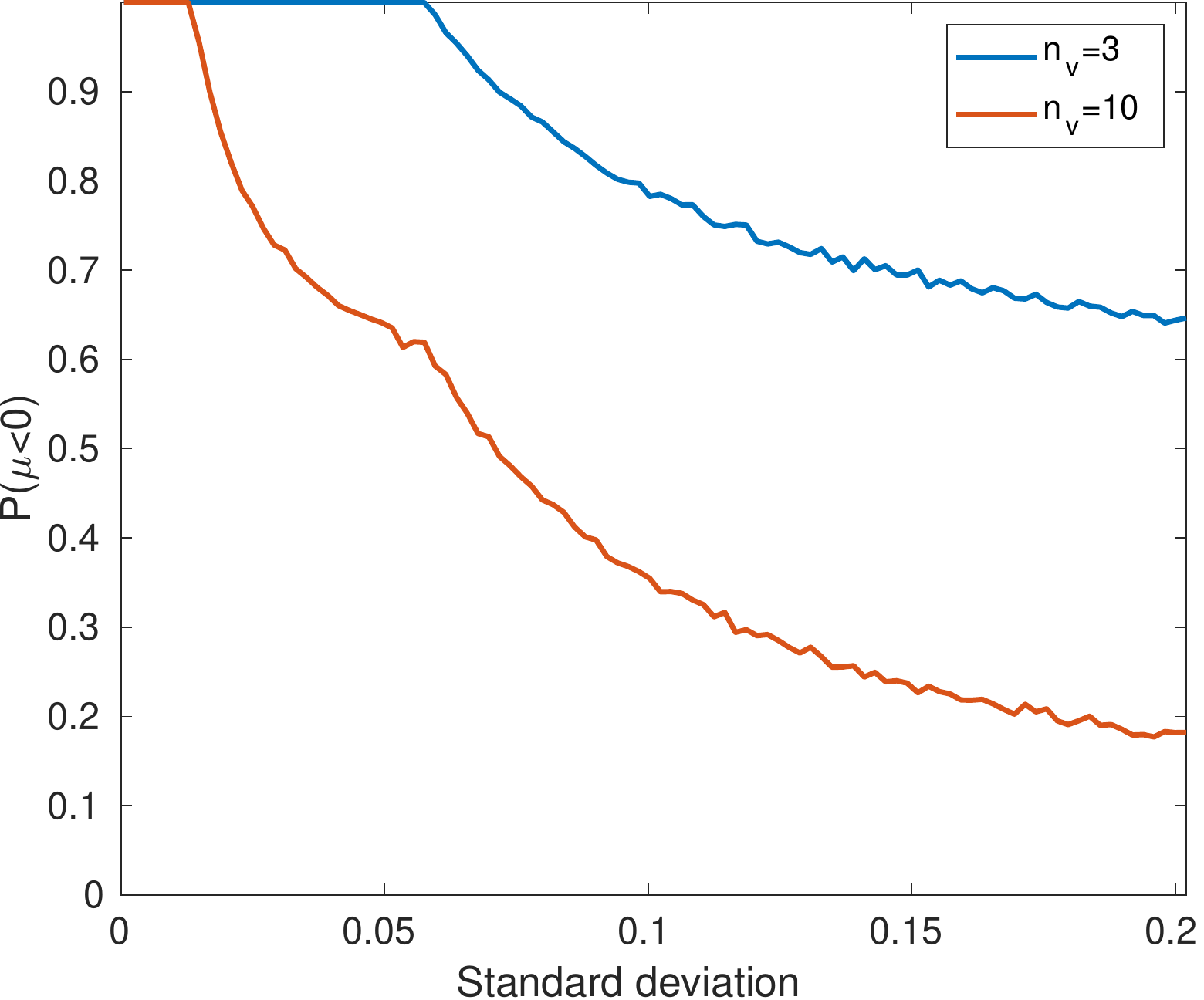}
	\caption{Probability \eqref{Pmu}  for  different velocities samples: $n_v=3$ (blue line), $n_V=10$(red line) for different values of  $\rho_0=0.4$(left)  $\rho_0=0.6$(right) when the standard deviation $\sigma$ ranges  from zero to $0.2$.}
	\label{fig:n_var_uni}
\end{figure}

Furthermore,  we  compare also the effect of different hesitation functions  In Figure \ref{fig:diff_press}, $h(\rho)=\rho$(blue line) and $h(\rho)=\rho^3$ are considered. The observed behavior is very similar. 

\begin{figure}[h!]
\centering
	\includegraphics[width=0.53\textwidth]{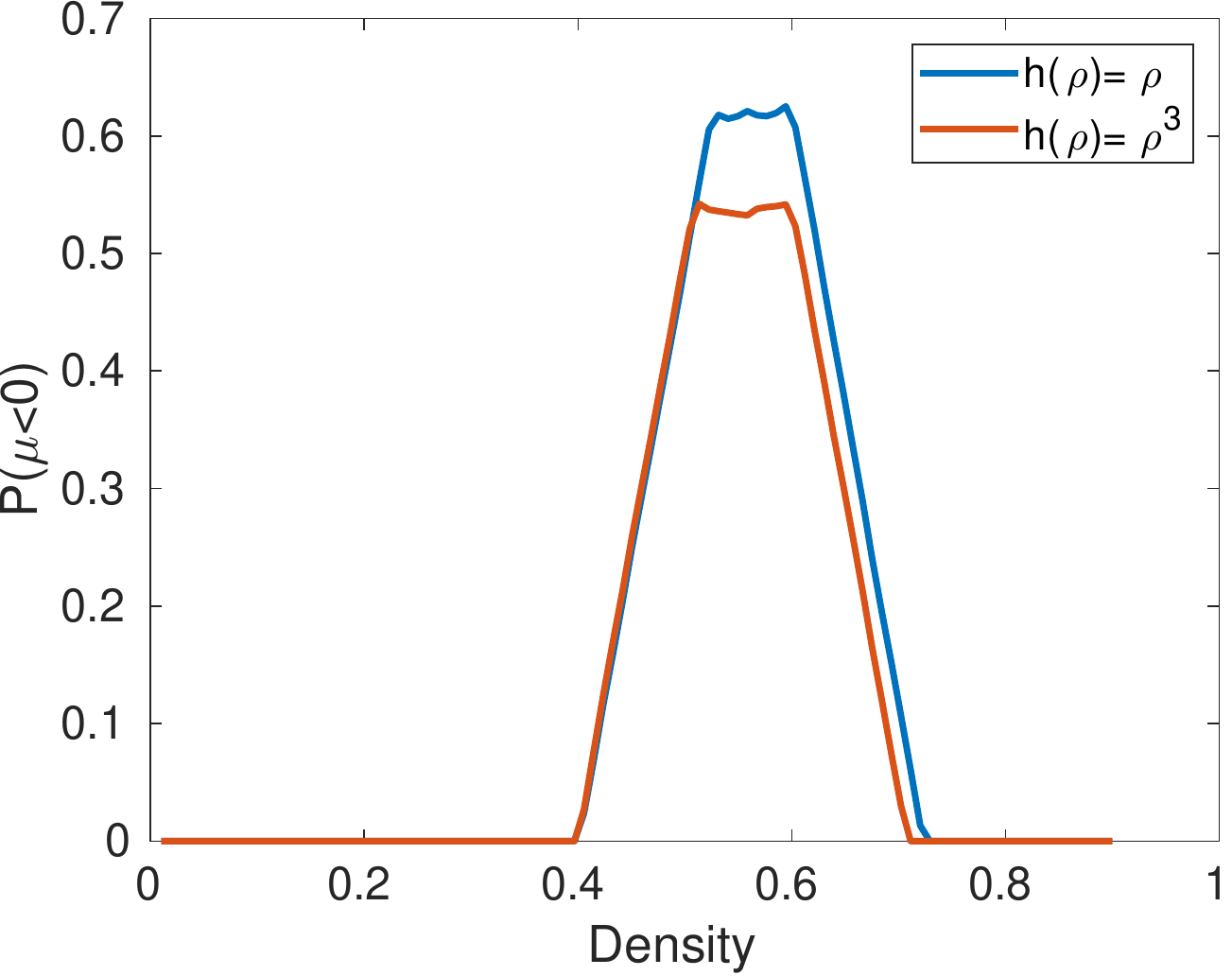}
	\caption{Probability \eqref{Pmu} for  different hesitation functions: $h(\rho)=\rho$(blue line) and $h(\rho)=\rho^3$(red line).}
	\label{fig:diff_press}
\end{figure}


\subsection{Time--dependent problems}

We investigate numerically if the dynamics amplifies the probability of instabilities starting from a Riemann problem. In order to evaluate \eqref{Pmu} for a temporal and spatially dependent $\rho$ we need to reconstruct the density and therefore we first show the convergence in $K.$ 
In all following computations we set $n_v=5$, $N_\xi= 10^2$ and define the Riemann problem:

\begin{equation}\label{eq:initial_data_rar}
\rho(x,0,\xi)=\begin{cases}
\rho_l\equiv \xi \sim \mathcal{U}(0.55, 0.85)   \qquad & x<1\\
0.2 & x\ge 1
\end{cases},
\qquad
v(x,0,\xi)=\begin{cases}
0.2   \qquad & x<1\\
0.7 & x\ge 1
\end{cases}.
\end{equation}

In Figure \ref{fig:diff_K} we show  $\mathbb{P}_{T_f,x}(\mu<0)$ for an increasing number of base functions $K$. Moreover, the probability of instabilities is increasing in time and travels backward. As explanation of this behavior we note that the the given data leads to a rarefaction wave in $(t,x)$ for any fixed $\rho_l.$ Hence, drivers observe a free flow area ahead and accelerate. The $95\%-$confidence band of the density $\rho$ at the final time $T_f$ is shown in Figure \ref{fig:rar}(right).

\begin{figure}[h!]
\includegraphics[width=0.54\textwidth]{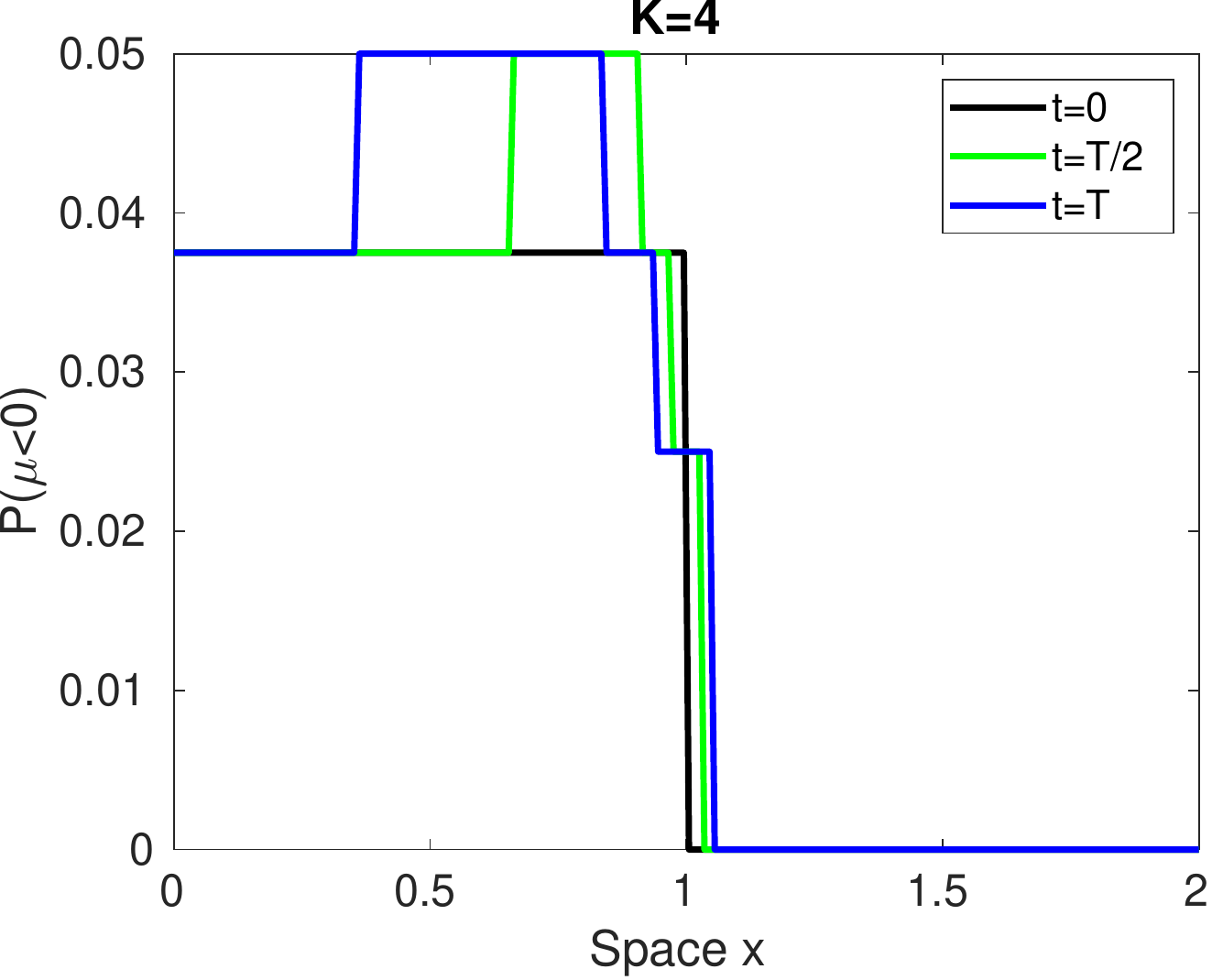}
\includegraphics[width=0.54\textwidth]{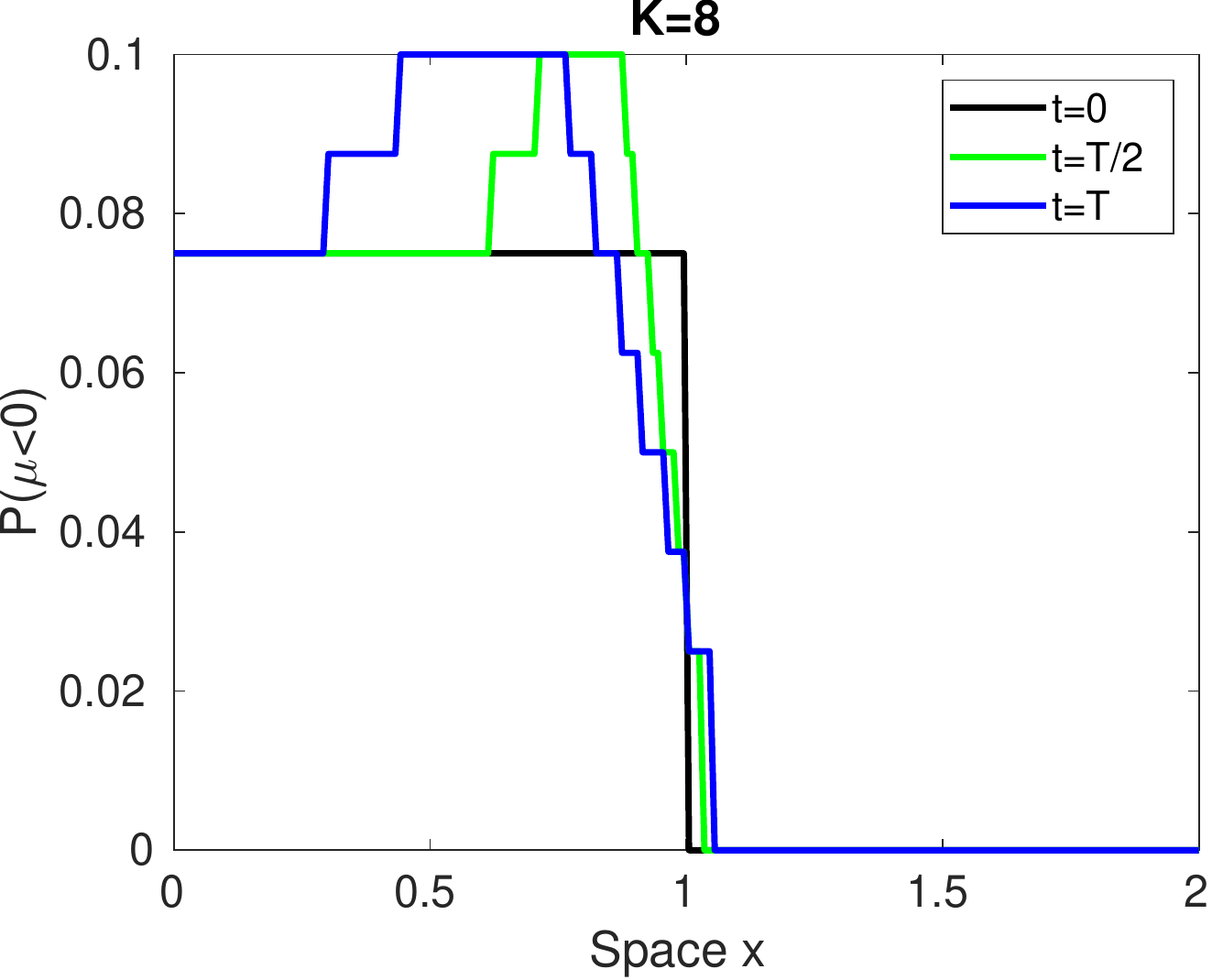}
\includegraphics[width=0.54\textwidth]{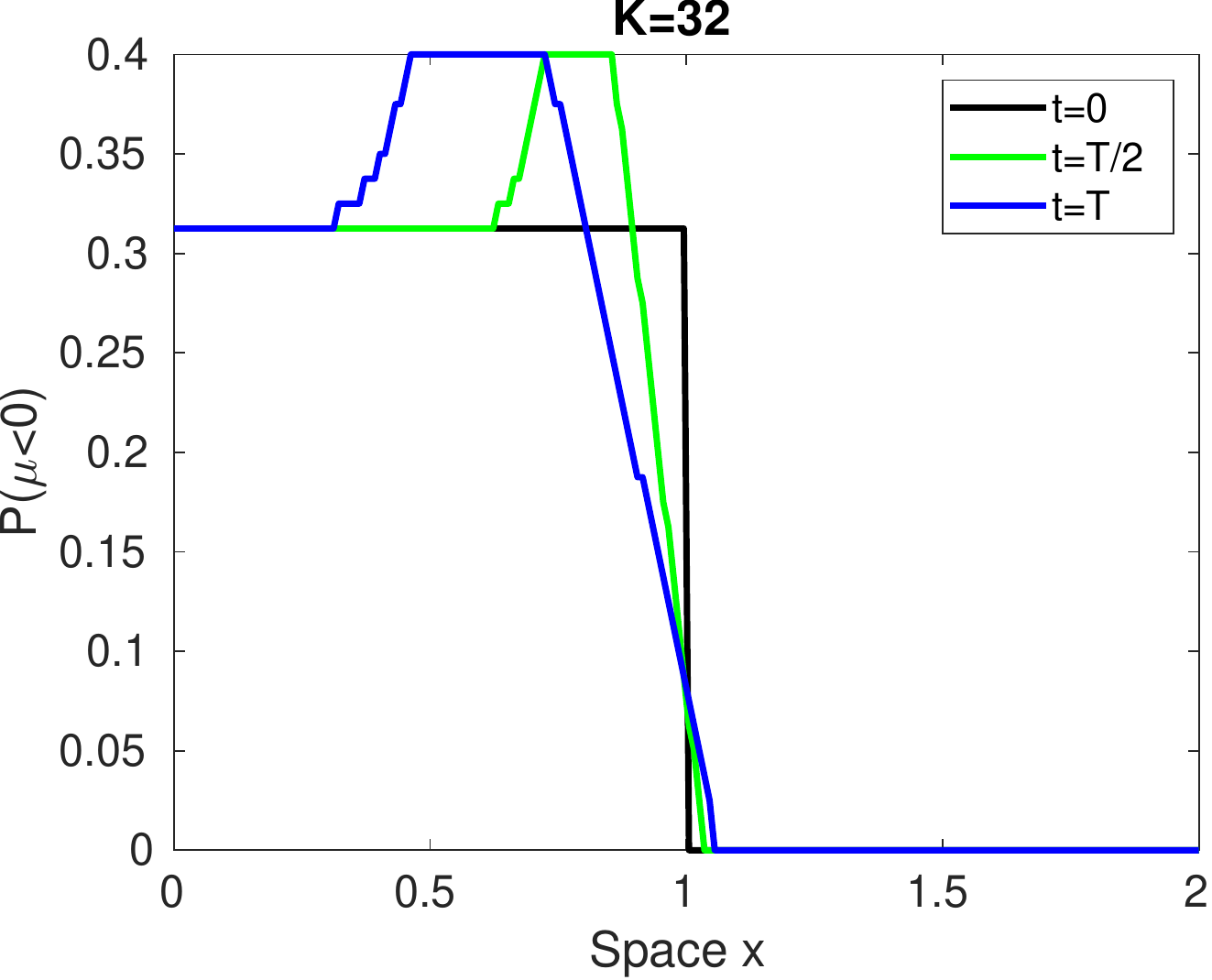}
\includegraphics[width=0.52\textwidth]{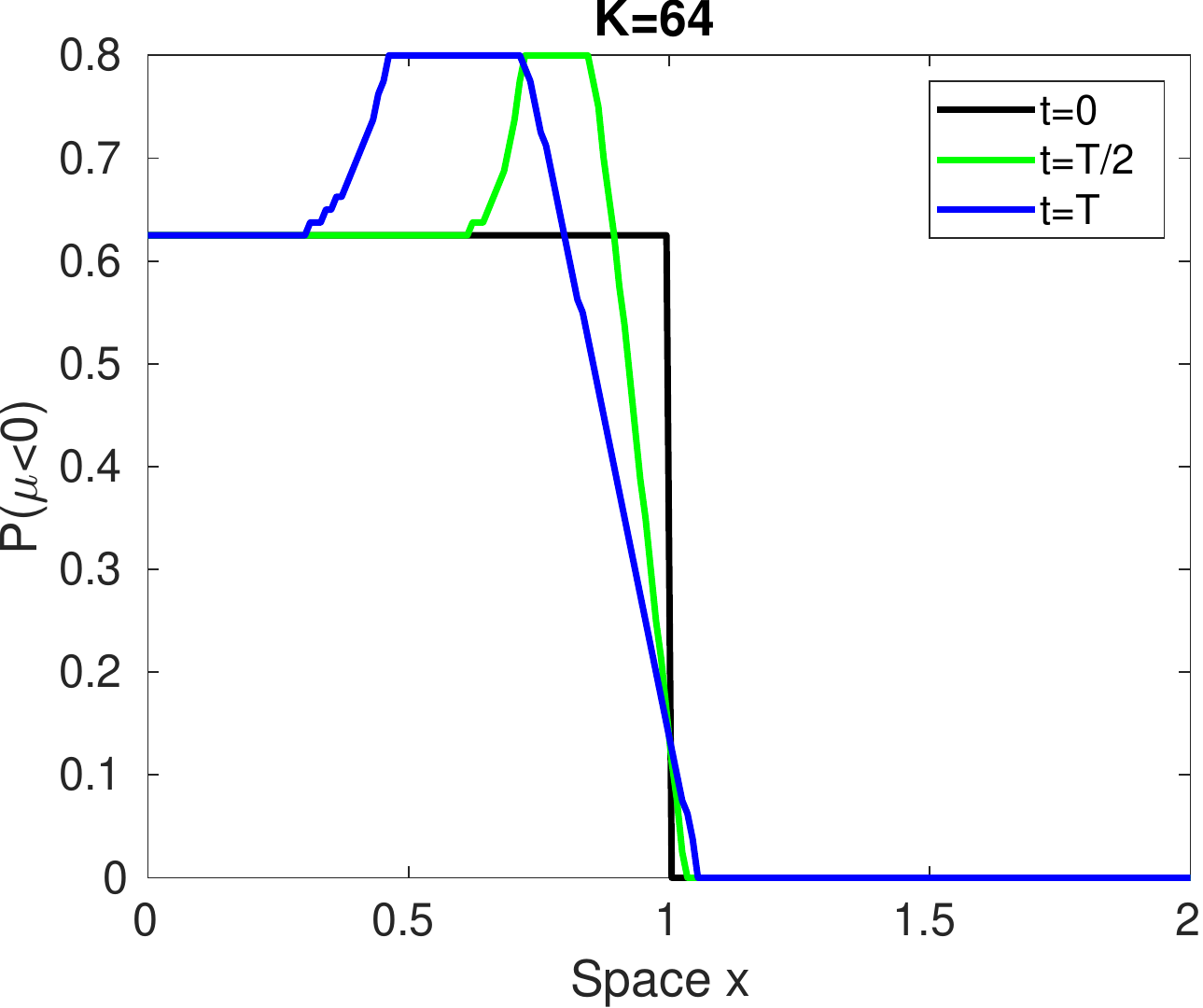}
    \caption{Probability \eqref{Pmu} at time $t=0$ (black line), $t=\frac{T:f}{2}$ (green line) $t=T$ (blue line) for $K=4$ (top-left), $K=8$ (top-right), $K=32$ (bottom-left), $K=64$ (bottom-right).}
    \label{fig:diff_K}
\end{figure}


\begin{figure}[h!]
\includegraphics[width=0.55\textwidth]{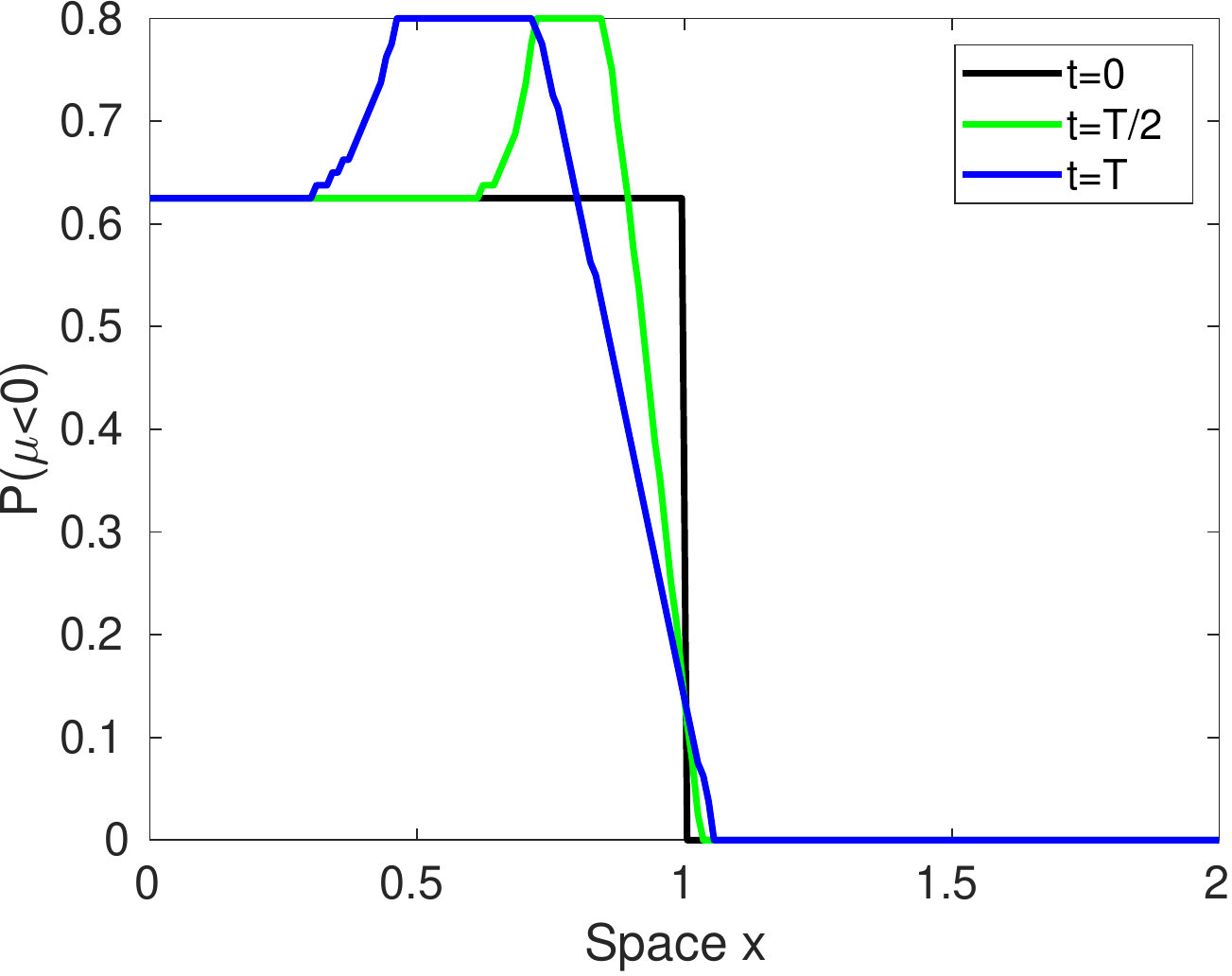}
\includegraphics[width=0.5\textwidth]{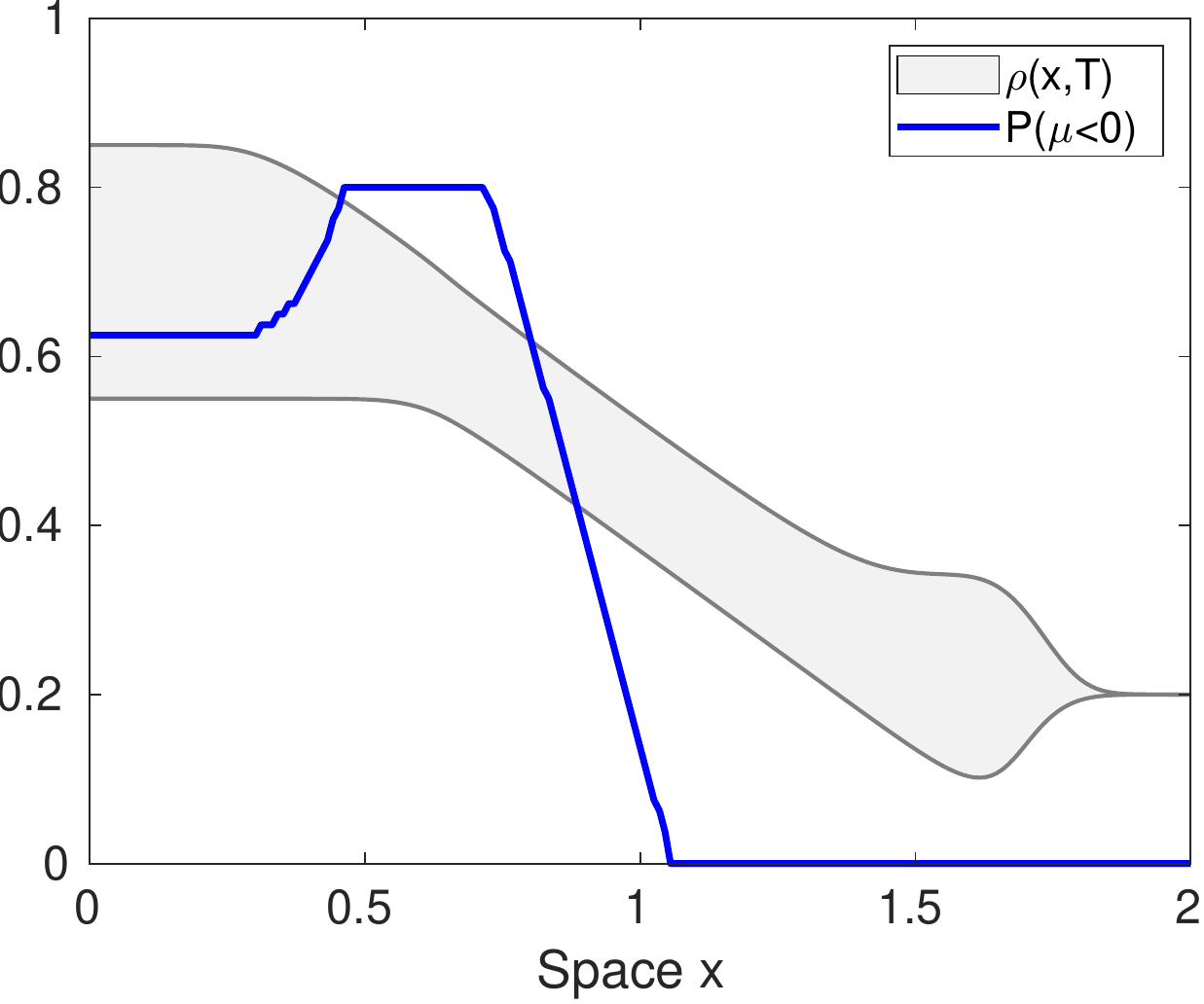}
\caption{Probability of negative diffusion coefficient in a rarefaction case at different time: $t=0$, $t=\frac{T_f}{2}$,$t=T_f$, $K=64$, and comparison with the confident region of the density at $t=T_f$.}
    \label{fig:rar}
\end{figure}

As second test case we consider a (random) shock wave as given by \eqref{eq:initial_data_shock}. Here,  the probability of instabilities increases and spreads both forward and backward. A possible explanation might be that the  vehicles have to decelerate in order to avoid collisions leading also to backwards spreading waves.  
\begin{equation}\label{eq:initial_data_shock}
\rho(x,0,\xi)=\begin{cases}
\rho_l\equiv\xi \sim \mathcal{U}(0.15, 0.45)   \qquad & x<1\\
0.75 & x\ge 1
\end{cases},
\qquad
v(x,0,\xi)=\begin{cases}
0.7   \qquad & x<1\\
0.3 & x\ge 1
\end{cases}.
\end{equation}
In Figure \ref{fig:shock}(right) the $95\%-$confidence band of the density at the final time $T_f$ is shown. It is interesting to note that even starting from $\mathbb{P}(\mu<0) \equiv 0$ does not ensure to avoid instabilities as  time evolves, see Figure \ref{fig:shock}(left). Indeed, at time $t=0$ the probability is zero. However, as the time evolves, the density enters the  transition phase and the probability of instability grows. 

\begin{figure}[h!]
\includegraphics[width=0.55\textwidth]{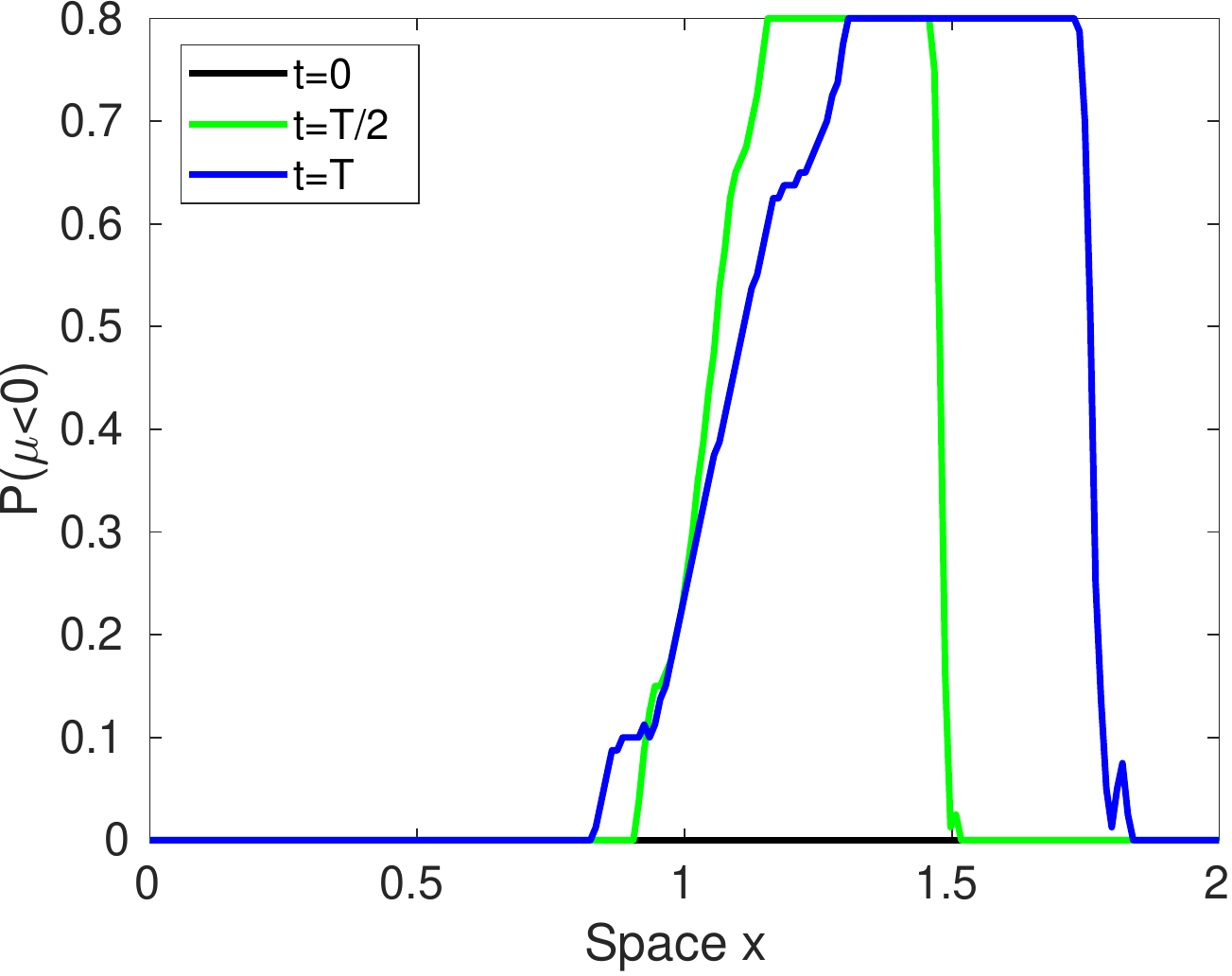}
\includegraphics[width=0.52\textwidth]{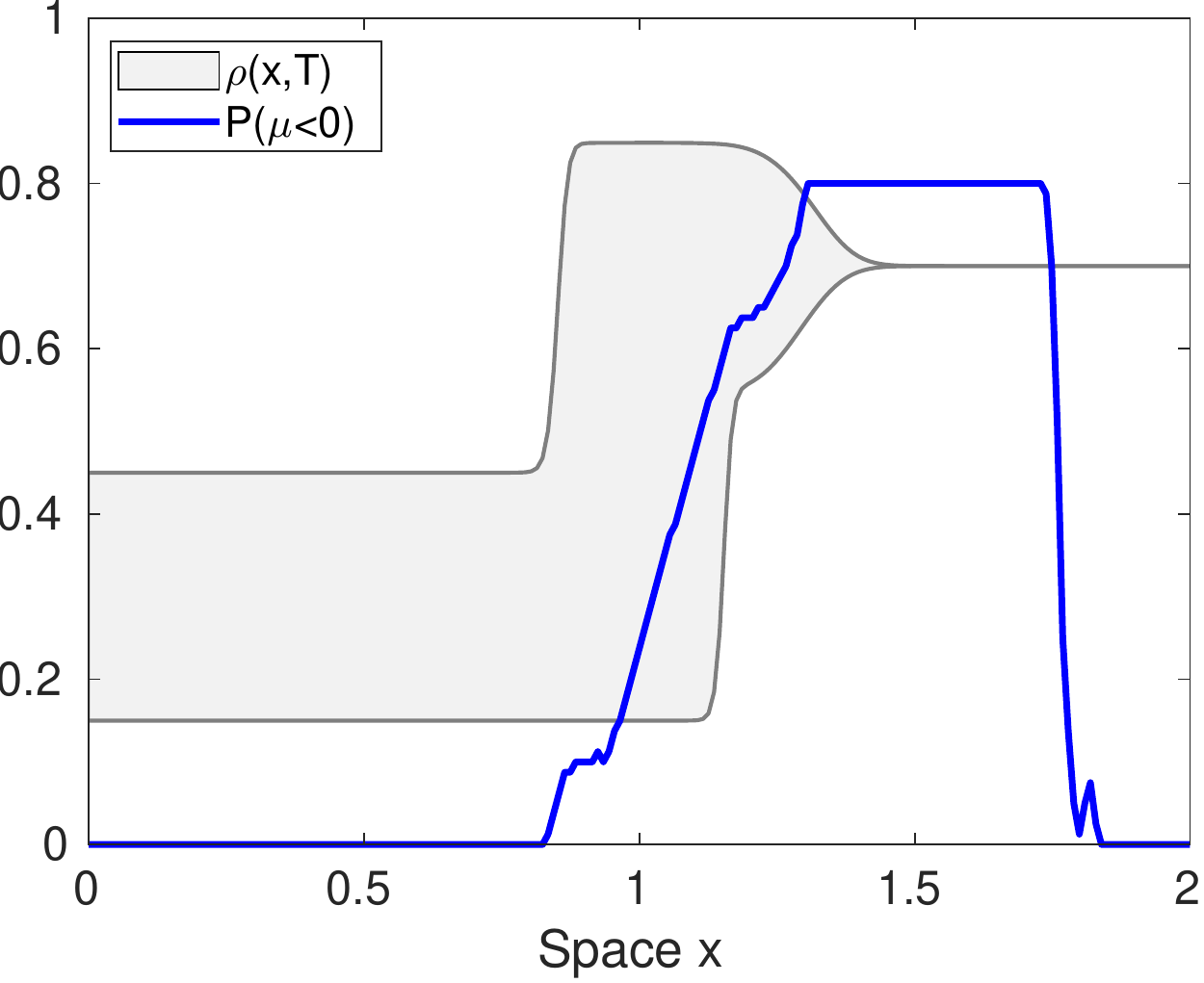}
    \caption{Density profile and probability of negative diffusion coefficient in a shock case at $t=T_f$, $K=64$.}
    \label{fig:shock}
\end{figure}

\subsection*{Acknowledgments}
The authors thank the Deutsche Forschungsgemeinschaft (DFG, German Research Foundation) for the financial support through 20021702/GRK2326,  333849990/IRTG-2379, HE5386/19-1,22-1,23-1 and under Germany's Excellence Strategy EXC-2023 Internet of Production 390621612.

\bibliography{ref}

\begin{thebibliography}{10}

\bibitem{aw2000SIAP}
{\sc A.~Aw and M.~Rascle}, {\em Resurrection of ``second order'' models of
  traffic flow}, SIAM J. Appl. Math., 60 (2000), pp.~916--938 (electronic).

\bibitem{Babuska2007}
{\sc I.~Babu{\v{s}}ka, F.~Nobile, and R.~Tempone}, {\em A stochastic
  collocation method for elliptic partial differential equations with random
  input data}, {SIAM} Journal on Numerical Analysis, 45 (2007), pp.~1005--1034.

\bibitem{Bando1995}
{\sc M.~Bando, K.~Hasebe, A.~Nakayama, A.~Shibata, and Y.~Sugiyama}, {\em
  Dynamical model of traffic congestion and numerical simulation}, Phys. Rev.
  E, 51 (1995), pp.~1035--1042.

\bibitem{MR2834083}
{\sc N.~Bellomo and C.~Dogbe}, {\em On the modeling of traffic and crowds: a
  survey of models, speculations, and perspectives}, SIAM Rev., 53 (2011),
  pp.~409--463.

\bibitem{BGK1954}
{\sc P.~L. Bhatnagar, E.~P. Gross, and M.~Krook}, {\em A {M}odel for
  {C}ollision {P}rocesses in {G}ases. {I}. {S}mall {A}mplitude {P}rocesses in
  {C}harged and {N}eutral {O}ne-{C}omponent {S}ystems}, Phys. Rev., 94 (1954),
  pp.~511--525.

\bibitem{BorscheKlar2018}
{\sc R.~Borsche and A.~Klar}, {\em A nonlinear discrete velocity relaxation
  model for traffic flow}, SIAM J. Appl. Math., 78 (2018), pp.~2891--2917.

\bibitem{S2}
{\sc R.~H. Cameron and W.~T. Martin}, {\em The orthogonal development of
  non-linear functionals in series of {F}ourier-{H}ermite functionals}, Annals
  of Mathematics, 48 (1947), pp.~385--392.

\bibitem{Zanella2019}
{\sc J.~Carrillo, L.~Pareschi, and M.~Zanella}, {\em Particle based g{PC}
  methods for mean-field models of swarming with uncertainty}, Communications
  in Computational Physics, 25 (2019), pp.~508--531.

\bibitem{Carrillo2019}
{\sc J.~Carrillo and M.~Zanella}, {\em {M}onte {C}arlo {gPC} methods for
  diffusive kinetic flocking models with uncertainties}, Vietnam Journal of
  Mathematics, 47 (2019), pp.~931--954.

\bibitem{H2}
{\sc Q.-Y. Chen, D.~Gottlieb, and J.~S. Hesthaven}, {\em Uncertainty analysis
  for the steady-state flows in a dual throat nozzle}, Journal of Computational
  Physics, 204 (2005), pp.~378--398.

\bibitem{MR1951956}
{\sc R.~M. Colombo}, {\em Hyperbolic phase transitions in traffic flow}, SIAM
  J. Appl. Math., 63 (2002), pp.~708--721.

\bibitem{MR3541527}
{\sc E.~Cristiani and S.~Sahu}, {\em On the micro-to-macro limit for
  first-order traffic flow models on networks}, Netw. Heterog. Media, 11
  (2016), pp.~395--413.

\bibitem{S15}
{\sc B.~J. Debusschere, H.~N. Najm, P.~P. P\'ebay, O.~M. Knio, R.~G. Ghanem,
  and O.~P.~L. Ma\^itre}, {\em Numerical challenges in the use of polynomial
  chaos representations for stochastic processes}, {SIAM} Journal on Scientific
  Computing, 26 (2004), pp.~698--719.

\bibitem{H0}
{\sc B.~Despr\'es, G.~Po\"ette, and D.~Lucor}, {\em Uncertainty quantification
  for systems of conservation laws}, Journal of Computational Physics, 228
  (2009), pp.~2443--2467.

\bibitem{MR3356989}
{\sc M.~Di~Francesco and M.~D. Rosini}, {\em Rigorous derivation of nonlinear
  scalar conservation laws from follow-the-leader type models via many particle
  limit}, Arch. Ration. Mech. Anal., 217 (2015), pp.~831--871.

\bibitem{MR3231191}
{\sc S.~Fan, M.~Herty, and B.~Seibold}, {\em Comparative model accuracy of a
  data-fitted generalized {A}w-{R}ascle-{Z}hang model}, Netw. Heterog. Media, 9
  (2014), pp.~239--268.

\bibitem{FTL1961}
{\sc D.~Gazis, R.~Herman, and R.~Rothery}, {\em Nonlinear follow-the-leader
  models of traffic flow}, Oper. Res., 9 (1961), pp.~545--567.

\bibitem{GersterHertyCicip2020}
{\sc S.~Gerster and M.~Herty}, {\em Entropies and symmetrization of hyperbolic
  stochastic {G}alerkin formulations}, Communications in Computational Physics,
  27 (2020), pp.~639--671.

\bibitem{gerster2021stability}
{\sc S.~Gerster, M.~Herty, and E.~Iacomini}, {\em Stability analysis of a
  hyperbolic stochastic galerkin formulation for the aw-rascle-zhang model with
  relaxation}, Mathematical Biosciences and Engineering, 18(4) (2021).

\bibitem{GersterJCP2019}
{\sc S.~Gerster, M.~Herty, and A.~Sikstel}, {\em Hyperbolic stochastic
  {G}alerkin formulation for the $p$-system}, Journal of Computational Physics,
  395 (2019), pp.~186--204.

\bibitem{H3}
{\sc D.~Gottlieb and D.~Xiu}, {\em {G}alerkin method for wave equations with
  uncertain coefficients}, Communications in Computational Physics, 3 (2008),
  pp.~505--518.

\bibitem{MR2646062}
{\sc M.~Herty and R.~Illner}, {\em Analytical and numerical investigations of
  refined macroscopic traffic flow models}, Kinet. Relat. Models, 3 (2010),
  pp.~311--333.

\bibitem{MR2580958}
{\sc M.~Herty and L.~Pareschi}, {\em Fokker-{P}lanck asymptotics for traffic
  flow models}, Kinet. Relat. Models, 3 (2010), pp.~165--179.

\bibitem{MR4063909}
{\sc M.~Herty, G.~Puppo, S.~Roncoroni, and G.~Visconti}, {\em The {BGK}
  approximation of kinetic models for traffic}, Kinet. Relat. Models, 13
  (2020), pp.~279--307.

\bibitem{MR3721873}
{\sc H.~Holden and N.~H. Risebro}, {\em The continuum limit of
  {F}ollow-the-{L}eader models---a short proof}, Discrete Contin. Dyn. Syst.,
  38 (2018), pp.~715--722.

\bibitem{K2}
{\sc J.~Hu and S.~Jin}, {\em A stochastic {G}alerkin method for the {B}oltzmann
  equation with uncertainty}, Journal of Computational Physics, 315 (2016),
  pp.~150--168.

\bibitem{Yuhua2018}
{\sc S.~Jin and Y.~Zhu}, {\em Hypocoercivity and uniform regularity for the
  {V}lasov-{P}oisson-{F}okker-{P}lanck system with uncertainty and multiple
  scales}, SIAM Journal on Mathematical Analysis, 50 (2018), pp.~1790--1816.

\bibitem{KlarWegener96}
{\sc A.~Klar and R.~Wegener}, {\em A kinetic model for vehicular traffic
  derived from a stochastic microscopic model}, Transport. Theor. Stat., 25
  (1996), pp.~785--798.

\bibitem{klar1997Enskog}
\leavevmode\vrule height 2pt depth -1.6pt width 23pt, {\em Enskog-like kinetic
  models for vehicular traffic}, J. Stat. Phys., 87 (1997), p.~91.

\bibitem{KuschMaxPrin2017}
{\sc J.~Kusch, G.~Alldredge, and M.~Frank}, {\em Maximum-principle-satisfying
  second-order intrusive polynomial moment scheme}, SMAI Journal of
  Computational Mathematics, 5 (2017), pp.~23--51.

\bibitem{O.P.LeMaitre2010}
{\sc O.~P. {Le Maître} and O.~M. Knio}, {\em Spectral Methods for Uncertainty
  Quantification}, Springer-Verlag GmbH, 2010.

\bibitem{LEcuyer2002}
{\sc P.~L'Ecuyer and C.~Lemieux}, {\em Recent advances in randomized
  quasi-monte carlo methods}, in International Series in Operations Research
  {\&} Management Science, Springer {US}, 2002, pp.~419--474.

\bibitem{lighthill1955PRSL}
{\sc M.~J. Lighthill and G.~B. Whitham}, {\em On kinematic waves. {II}. {A}
  theory of traffic flow on long crowded roads}, Proc. Roy. Soc. London. Ser.
  A., 229 (1955), pp.~317--345.

\bibitem{S4}
{\sc O.~P.~L. Ma\^itre and O.~M. Knio}, {\em Spectral Methods for uncertainty
  quantification}, Springer Netherlands, 1~ed., 2010.

\bibitem{S5}
{\sc P.~Pettersson, G.~Iaccarino, and J.~Nordstr\"om}, {\em A stochastic
  {G}alerkin method for the {E}uler equations with {R}oe variable
  transformation}, Journal of Computational Physics, 257 (2014), pp.~481--500.

\bibitem{S16}
\leavevmode\vrule height 2pt depth -1.6pt width 23pt, {\em Polynomial chaos
  methods for hyperbolic partial differential equations}, Springer
  International Publishing, Switzerland, 2015.

\bibitem{H4}
{\sc R.~Pulch and D.~Xiu}, {\em Generalised polynomial chaos for a class of
  linear conservation laws}, Journal of Scientific Computing, 51 (2012),
  pp.~293--312.

\bibitem{PSTV2}
{\sc G.~Puppo, M.~Semplice, A.~Tosin, and G.~Visconti}, {\em Kinetic models for
  traffic flow resulting in a reduced space of microscopic velocities}, Kinet.
  Relat. Mod., 10 (2017), pp.~823--854.

\bibitem{SeiboldFlynnKasimovRosales2013}
{\sc B.~Seibold, M.~R. Flynn, A.~R. Kasimov, and R.~R. Rosales}, {\em
  Constructing set-valued fundamental diagrams from jamiton solutions in second
  order traffic models}, Netw. Heterog. Media, 8 (2013), pp.~745--772.

\bibitem{K1}
{\sc R.~Shu, J.~Hu, and S.~Jin}, {\em A stochastic {G}alerkin method for the
  {B}oltzmann equation with multi-dimensional random inputs using sparse
  wavelet bases}, Numerical Mathematics: {T}heory, Methods and Applications, 10
  (2017), pp.~465--488.

\bibitem{S18}
{\sc T.~J. Sullivan}, {\em Introduction to uncertainty quantification}, Texts
  in Applied Mathematics, Springer, Switzerland, 1~ed., 2015.

\bibitem{Taimre2011}
{\sc K.~Taimre, Botev}, {\em Handbook of {M}onte {C}arlo Methods}, John Wiley
  and Sons, 2011.

\bibitem{MR3878149}
{\sc A.~Tosin and M.~Zanella}, {\em Boltzmann-type models with uncertain binary
  interactions}, Commun. Math. Sci., 16 (2018), pp.~963--985.

\bibitem{S1}
{\sc N.~Wiener}, {\em The homogeneous chaos}, American Journal of Mathematics,
  60 (1938), pp.~897--936.

\bibitem{Xiu2010}
{\sc D.~Xiu}, {\em Numerical methods for stochastic computations: a spectral
  method approach}, Princeton University Press, Princeton, N.J, 2010.

\bibitem{S3}
{\sc D.~Xiu and G.~E. Karniadakis}, {\em The {W}iener-{A}skey polynomial chaos
  for stochastic differential equations}, {SIAM} Journal on Scientific
  Computing, 24 (2002), pp.~619--644.

\bibitem{Zanella2020}
{\sc M.~Zanella}, {\em Structure preserving stochastic {G}alerkin methods for
  {F}okker–{P}lanck equations with background interactions}, Mathematics and
  Computers in Simulation, 168 (2020), pp.~28--47.

\bibitem{Zhang2002}
{\sc H.~M. Zhang}, {\em A non-equilibrium traffic model devoid of gas-like
  behavior}, Transport. Res. B-Meth., 36 (2002), pp.~275--290.

\bibitem{Yuhua2017}
{\sc Y.~Zhu and S.~Jin}, {\em The {V}lasov-{P}oisson-{F}okker-{P}lanck system
  with uncertainty and a one-dimensional asymptotic preserving method},
  Multiscale Modeling \& Simulation, 15 (2017), pp.~1502--1529.

\end{thebibliography}
\bibliographystyle{siam}

\end{document}